\newdimen\AAdi%
\newbox\AAbo%
\def\AAk#1#2{\s_etbox\AAbo=\hbox{#2}\AAdi=\wd\AAbo\kern#1\AAdi{}}%
\def\AAr#1#2#3{\s_etbox\AAbo=\hbox{#2}\AAdi=\ht\AAbo\raise#1\AAdi\hbox{#3}}%
\font\tenmsb=msbm10 at 12pt \font\sevenmsb=msbm7 at 8pt
\font\fivemsb=msbm5 at 6pt
\newtheorem{theorem}{Theorem}
\newtheorem{corollary}[theorem]{Corollary}
\newtheorem{lemma}[theorem]{Lemma}
\numberwithin{equation}{section} \numberwithin{theorem}{section}
\renewcommand{\topmargin}{0cm}
\renewcommand{\oddsidemargin}{5mm}
\renewcommand{\evensidemargin}{5mm}
\renewcommand{\textwidth}{150mm}
\renewcommand{\textheight}{230mm}
\def\R{\mathbb R}
\def\S{\mathbb S}
\def\na{\nabla}
\def\f#1#2{\frac{#1}{#2}}
\def\a{\alpha}
\def\r{\Re_{I\!V}}
\def\p#1{\partial #1}
\def\de{\delta}
\def\De{\Delta}
\def\e{\eta}
\def\ep{\epsilon}
\def\g{\gamma}
\def\k{\kappa}
\def\la{\lambda}
\def\La{\Lambda}
\def\lan{\langle}
\def\ran{\rangle}
\def\Om{\Omega}
\def\th{\theta}
\def\Th{\Theta}
\def\si{\sigma}
\def\Si{\Sigma}
\def\r{\rho}
\def\z{\zeta}
\begin{document}

\title
{A Bernstein type theorem for minimal hypersurfaces via Gauss maps}
\author{Qi Ding}
\address{Shanghai Center for Mathematical Sciences, Fudan University, Shanghai 200438, China}
\email{dingqi@fudan.edu.cn}

\thanks{The author is grateful to Tobias H. Colding for having invited him to visit the Department of Mathematics in MIT, where this work was done.
The author would like to thank J$\mathrm{\ddot{u}}$rgen Jost, Yuanlong Xin for their interest in this work.
The author would like to express his sincere gratitude to the referee for valuable comments that will help to improve the quality and accuracy of the manuscript.
He is partially sponsored by NSFC 11871156 and the China Scholarship Council.}
\date{}
\begin{abstract}
Let $M$ be an $n$-dimensional smooth oriented complete embedded minimal hypersurface in $\R^{n+1}$ with Euclidean volume growth. We show that if the image under the Gauss map of $M$ avoids some neighborhood of a half-equator, then $M$ must be an affine hyperplane.
\end{abstract}

\maketitle

\section{Introduction}

The original Bernstein theorem says that each entire minimal graph in $\R^3$ must be a plane.
The Bernstein theorem can be generalized to high dimensions as follows: each entire minimal graph in $\R^{n+1}$ must be a plane provided $n\le7$, which were achieved by successive
efforts of W. Fleming \cite{F}, E. De Giorgi \cite{DG}, F. J. Almgren \cite{Al}, and finally completely settled by J. Simons \cite{Si}.
For $n\ge8$, Bombieri-De Giorgi-Giusti provided a counterexample by constructing a nontrivial entire
minimal graph in $\R^{n+1}$, whose tangent cone at infinity is a vertical stable minimal cone, a non-warped product of a Simons' cone and line.
Under some conditions on graphic functions, all entire minimal graphs could be affine (see \cite{EH,M}).
In particular, all minimal graphs are stable minimal hypresurfaces.
In $\R^3$, all oriented complete stable minimal surfaces in $\R^3$ are affine plane shown by Fischer-Colbrie and Schoen \cite{FS}, and do Carmo-Peng \cite{DP}.
For $n\le 5$, with integral curvature estimates Schoen-Simon-Yau proved that all oriented complete stable minimal hypersurfaces with Euclidean volume growth in $\R^{n+1}$ must be affine \cite{SSY}. With the embedded condition, Schoen-Simon can show it for the case $n\le 6$ by their regularity theorem \cite{SS}.

Let $M$ be an $n$-dimensional smooth oriented complete minimal hypersurface in $\R^{n+1}$.
The Ruh-Vilms theorem tells us that the Gauss map $\g:\ M\rightarrow\S^n$ is a harmonic map \cite{RV}.
In \cite{So1}, Solomon showed that if $S$ is an area-minimizing hypersurface in $\R^{n+1}$ with $\p S=0$, the first
Betti number of reg$S$ vanishes and the Gauss map of $S$ omits some neighborhood of $\S^{n-2}$ in $\S^n$, then each component of spt$S$ is a hyperplane.
The condition on the first
Betti number is necessary by the example of Simons' cones (see also section 6 in \cite{So1} for instance).
In \cite{JXY}, Jost-Xin-Yang found a maximal open convex supporting subset $\S^n\setminus\overline{\S}^{n-1}_+$ of $\S^n$,
where $\overline{\S}^{n-1}_+$ is the hemisphere of $\S^{n-1}$ in $\S^n$.
They constructed a smooth bounded strictly convex function on any compact set $K$ in $\S^n\setminus\overline{\S}^{n-1}_+$, and then studied the regularity of harmonic maps to $K$.
As an application, they got a Bernstein type theorem as follows (see Theorem 6.5 in \cite{JXY}).
\begin{theorem}
Let $M^n\subset\R^{n+1}$ be a complete minimal embedded hypersurface with Euclidean volume growth.
Assume that there is a positive constant $C$, such that
for arbitrary $y\in M$ and $R>0$, the Neumann-Poincar$\mathrm{\acute{e}}$ inequality
$$\int_{M\cap \mathbf{B}_R(y)}|v-\bar{v}_{R,y}|^2\le CR^2\int_{M\cap \mathbf{B}_R(y)}|\na v|^2 $$
holds for each function $v\in C^\infty(\mathbf{B}_R(y))$, where $\mathbf{B}_R(y)$ is the ball in $\R^{n+1}$ with the radius $R$ and centered at $y$,
$\bar{v}_{R,y}$ is the average value of $v$ on $\mathbf{B}_R(y)$.
If the image under the Gauss map omits a neighborhood of $\overline{\S}^{n-1}_+$, then $M$ has to be
an affine linear space.
\end{theorem}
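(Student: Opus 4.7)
The plan is to reduce the theorem to a Liouville problem for a bounded subharmonic function on $M$, via the Jost-Xin-Yang convex function and the Ruh-Vilms theorem, and then to force constancy using the Neumann-Poincar\'e inequality together with the Euclidean volume growth.

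Since the Gauss map $\g:M\to\S^n$ omits an open neighborhood of $\overline{\S}^{n-1}_+$, its image lies in some compact set $K\subset\S^n\setminus\overline{\S}^{n-1}_+$. The Jost-Xin-Yang construction recalled above yields a smooth, bounded, strictly convex function $\phi$ on $K$, and the Ruh-Vilms theorem says that $\g$ is harmonic. Hence the composition $u:=\phi\circ\g$ is smooth and bounded on $M$ and satisfies
\begin{equation*}
\Delta u\;\geq\;c\,|d\g|^{2}\;=\;c\,|A|^{2},\qquad |\nabla u|\;\leq\;\|d\phi\|_{L^\infty(K)}\,|A|,
\end{equation*}
for a positive constant $c$ depending only on the strict convexity of $\phi$ on $K$; here $|A|$ denotes the norm of the second fundamental form of $M$. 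A standard Caccioppoli step follows: for $y\in M$ and a cutoff $\eta$ supported in $\mathbf{B}_{2R}(y)$ with $\eta\equiv1$ on $\mathbf{B}_R(y)$ and $|\nabla\eta|\leq 2/R$, testing $\Delta u\geq c|A|^{2}$ against $\eta^{2}$, integrating by parts, applying Cauchy-Schwarz and Young's inequality with the pointwise bound above, and using Euclidean volume growth, yield
\begin{equation*}
\int_{M\cap \mathbf{B}_R(y)}|A|^{2}\;\leq\;C\,R^{\,n-2}.
\end{equation*}
For $n=2$ this already forces $\int_M|A|^{2}<\infty$, and a logarithmic cutoff finishes the argument.

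The main step, and the main obstacle, is to close the argument when $n\geq 3$, and this is precisely where the Neumann-Poincar\'e inequality is essential. My plan is to couple it with the subharmonicity of $u$. Testing $\Delta u\geq c|A|^{2}$ against $(u-\bar u_{R,y})\eta^{2}$ rather than $\eta^{2}$ produces a Caccioppoli-type inequality whose right-hand side controls a weighted $|A|^{2}$-integral by the $L^{2}$ oscillation of $u$ on $\mathbf{B}_{R}(y)\cap M$; the Neumann-Poincar\'e inequality then converts that oscillation back into $CR^{2}\int|\nabla u|^{2}\leq C'R^{2}\int|A|^{2}$. Coupling these two inequalities and iterating over dyadic scales should yield Morrey/Campanato decay of $R^{2-n}\int_{\mathbf{B}_{R}(y)\cap M}|A|^{2}$ and force $|A|\equiv 0$. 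Equivalently, Neumann-Poincar\'e together with Euclidean volume growth provides, via the Saloff-Coste machinery, a scale-invariant Sobolev inequality on geodesic balls in $M$, so one can run Moser iteration and weak Harnack for the bounded subharmonic function $u$ and invoke the resulting strong Liouville property. Either route presents the same essential difficulty: the Neumann-Poincar\'e inequality is the only substitute for an intrinsic Sobolev inequality on $M$, and it must be coupled in a quantitatively sharp way with the nonlinear lower bound $\Delta u\geq c|A|^{2}$ and with the \emph{a priori} boundedness of $u$. Once $|A|\equiv 0$ is obtained, $M$ is totally geodesic; being smooth, complete, connected, and embedded in $\R^{n+1}$, it is an affine hyperplane.
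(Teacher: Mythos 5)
A preliminary remark: this statement is Theorem 6.5 of Jost--Xin--Yang, which the present paper only \emph{quotes} as background (its own contribution is to remove the Neumann--Poincar\'e hypothesis), so there is no in-paper proof to compare against and I am judging your argument on its own terms. Your setup is correct and standard: $u=\phi\circ\g$ is bounded with $\De_M u\ge c|A|^2$ and $|\na u|\le C|A|$ by Ruh--Vilms and strict convexity, the Caccioppoli step gives $\int_{M\cap\mathbf{B}_R}|A|^2\le CR^{n-2}$, and $n=2$ closes with a logarithmic cutoff. The gap is exactly at the step you flag as the main obstacle, and neither of your routes for $n\ge3$ closes it. For route (a): writing $-2\int\eta\na\eta\cdot\na u=2\int(u-\bar u_{2R})\,\mathrm{div}_M(\eta\na\eta)$, applying Neumann--Poincar\'e and then $|\na u|\le C|A|$ yields, for $G(R):=R^{2-n}\int_{\mathbf{B}_R\cap M}|A|^2$, only an inequality of the shape $G(R)\le C\,G(2R)^{1/2}$ (which one already gets from Cauchy--Schwarz on the annulus without any Poincar\'e inequality). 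Together with the uniform bound $G\le C_1$ this is perfectly consistent with $G$ being a positive constant, as it is for a nonflat minimal cone, so no Morrey/Campanato decay follows from dyadic iteration; a hole-filling or genuine smallness mechanism is missing. For route (b): the ``strong Liouville property'' you invoke does not exist for bounded subharmonic functions. Doubling plus Poincar\'e gives, via Moser/Saloff-Coste, Harnack and hence Liouville for positive (or sublinear) \emph{harmonic} functions and weak Harnack for nonnegative \emph{supersolutions}; it does not force a bounded subsolution to be constant --- on $\R^n$, $n\ge3$, which satisfies both hypotheses, $\max(-|x|^{2-n},-1)$ is a bounded nonconstant subharmonic function.

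What actually makes the Jost--Xin--Yang argument work is a Hildebrandt--Jost--Widman type telescoping iteration on $\sup_{\mathbf{B}_R\cap M}(\phi\circ\g)$: the weak Harnack inequality (this is where Neumann--Poincar\'e enters, through Moser iteration) is applied to the nonnegative superharmonic function $\sup_{\mathbf{B}_{2R}\cap M}(\phi\circ\g)-\phi\circ\g$, and combined with strict convexity it forces the normalized energy $R^{2-n}\int_{\mathbf{B}_R\cap M}|\na\g|^2$ to decay by a definite factor between consecutive dyadic scales, which against the a priori bound $G\le C_1$ gives $G\equiv0$. (The present paper avoids Neumann--Poincar\'e entirely by a different mechanism: the mean value inequality for subharmonic functions, available for minimal hypersurfaces from the monotonicity formula alone, plus the $L^1$ convergence of $v$ to $v_*$ along a blow-down via the Schoen--Simon excess estimate, plus an Ecker--Huisken curvature estimate once the Gauss image is confined to an open half-sphere.) Either way, passing from ``bounded subharmonic with $\De_M u\ge c|A|^2$'' to ``$|A|\equiv0$'' is the entire content of the theorem when $n\ge3$, and it cannot be left at the level of ``should yield''.
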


The necessity of the Neumann-Poincar$\mathrm{\acute{e}}$ inequality in the above theorem is not clear as they said in \cite{JXY}.
Later, Yang further proved that the above conclusion holds provided 1 of the distance between the Gauss image of $M\cap \mathbf{B}_R(y)$ and $\overline{\S}^{n-1}_+$ is less than $o(\log\log R)$ for the large $R$ in \cite{Y}.

In this paper, we remove the condition on the Neumann-Poincar$\mathrm{\acute{e}}$ inequality in Theorem 6.5 of \cite{JXY} instead by the oriented condition,
and obtain the following Bernstein type theorem.
\begin{theorem}
Let $M$ be an $n$-dimensional smooth oriented complete embedded minimal hypersurface in $\R^{n+1}$ with Euclidean volume growth. If the image under the Gauss map omits a neighborhood of $\overline{\S}^{n-1}_+$, then $M$ must be an affine hyperplane.
\end{theorem}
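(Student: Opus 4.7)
The strategy is to combine the Jost--Xin--Yang strictly convex function with a blow-down analysis. The oriented hypothesis is used to make the Gauss map globally defined and to pass coherently to tangent cones, replacing the Neumann--Poincar\'e inequality in the original approach of Jost--Xin--Yang.

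Since $\g(M)$ lies in a compact set $K\subset\S^n\setminus\overline{\S}^{n-1}_+$, Jost--Xin--Yang produces a smooth strictly convex function $\phi$ on a neighborhood of $K$. Ruh--Vilms gives that $\g$ is harmonic, so by the chain rule,
\[
\Delta_M(\phi\circ\g)=\mathrm{Hess}\,\phi(d\g,d\g)\ge c_0|d\g|^2=c_0|A|^2,
\]
where $|d\g|^2=|A|^2$ is the energy density of the Gauss map of a hypersurface. Hence $u:=\phi\circ\g$ is bounded subharmonic on $M$ with the extra bound $|\nabla u|^2\le C_1\Delta u$. Testing against $\eta^2$ for a standard cutoff $\eta$ with $\eta\equiv 1$ on $B_R\cap M$, $\mathrm{supp}\,\eta\subset B_{2R}$, $|\nabla\eta|\le C/R$, and applying Cauchy--Schwarz together with the Euclidean volume growth, I obtain the scale-invariant curvature estimate
\[
\int_{B_R\cap M}|A|^2\,dV\le C\,R^{n-2}.
\]

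With this estimate I would perform a blow-down: the rescaled surfaces $M_i:=R_i^{-1}M$ have uniformly bounded local energy and, after passing to a subsequence, converge as stationary integer varifolds to an embedded oriented minimal cone $C\subset\R^{n+1}$, with smooth convergence on $\mathrm{reg}\,C$ by Allard's theorem. The Gauss image of $\mathrm{reg}\,C$ stays in $K$. Applying the first step on $\mathrm{reg}\,C$ yields $u_\infty=\phi\circ\g_C$, bounded and subharmonic; because the Gauss map of a minimal cone is $0$-homogeneous, $u_\infty$ descends to a bounded subharmonic function on the compact link $\Sigma=C\cap\S^n$, well-defined on $\mathrm{reg}\,\Sigma$. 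The singular set of $\Sigma$ has Hausdorff codimension at least two, so bounded subharmonic functions extend across it; the maximum principle then forces $u_\infty$ constant, strict convexity of $\phi$ makes $\g_C$ locally constant, and each component of $\mathrm{reg}\,C$ is a piece of a hyperplane through the origin. Embeddedness with orientedness then forces $C$ to be a single multiplicity-one hyperplane. Allard's regularity writes $M$, outside a compact set, as a smooth graph over this hyperplane with $C^{1,\alpha}$-small gradient, and a Moser-type argument (or direct uniqueness for minimal graphs at infinity) upgrades this to $M$ being the affine hyperplane itself.

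The main obstacle is the last blow-down step, namely ruling out tangent cones that are either multi-sheeted or are hyperplanes of higher multiplicity. Here the oriented and embedded hypotheses are essential: orientation descends to $C$ via a coherent choice of normal, and the Gauss-image restriction forbids two sheets with normals both in the complement of the neighborhood of the half-equator from coexisting embedded; meanwhile the scale-invariant curvature estimate ensures smooth convergence on the regular part so that the Gauss-image restriction really passes to the limit. This is exactly what replaces the Neumann--Poincar\'e inequality in the Jost--Xin--Yang framework.
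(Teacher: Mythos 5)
Your overall architecture (strictly convex function on the Gauss image, subharmonicity via Ruh--Vilms, blow-down, maximum principle on the link of the cone) is the right one, but the two steps you either assert in passing or flag as "the main obstacle" are exactly where the content of the theorem lies, and as written they fail. The first gap is the multiplicity issue. You claim that the Gauss-image restriction together with embeddedness "forbids two sheets with normals both in the complement of the neighborhood of the half-equator from coexisting." This is not true: the set $\S^n\setminus\overline{\S}^{n-1}_+$ contains antipodal pairs (for instance $\pm e_{n+1}$), so a tangent cone at infinity can perfectly well be a hyperplane of multiplicity $\ge 2$ whose sheets carry opposite normals, both lying in $K$. Without multiplicity one you cannot invoke Allard to get smooth convergence on the regular part, so your function $u_\infty$ on the cone is not even defined. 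The paper spends all of its Section 3 on precisely this point: it uses the vanishing of the \emph{unoriented} excess to show that $\min\{|\nu-\nu_*|,|\nu+\nu_*|\}$ tends to $0$ in $L^1$ average, builds the convex function $\Theta$ so that $\Theta(\nu_*)=\Theta(-\nu_*)$ (so that $\Theta\circ\g$ converges regardless of which sheet one is on), proves $\sup_M(\Theta\circ\g)=\Theta(\nu_*)$ by a mean value inequality, and only then deduces from the specific form of $\Theta$ that the Gauss image lies in a closed hemisphere, after which a strong maximum principle plus an Ecker--Huisken iteration yields $|A|\equiv 0$. None of this is supplied by your appeal to orientation.

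The second gap is the regularity of the blow-down limit. $M$ is neither stable nor area-minimizing by hypothesis, so the limit is merely a stationary integral varifold and there is no a priori codimension bound on its singular set; your removability claim ("Hausdorff codimension at least two") is unjustified. Even granting a reasonable limit, your maximum-principle argument on the link does not exclude the case where the cone is a union of several hyperplanes (transverse, or parallel with multiplicity): there $\g$ is locally constant on the regular part but the cone is not a single plane. The paper must develop a separate $\varepsilon$-regularity theorem (Theorem 4.1) and a dimension-reduction argument (Lemma 5.1) to rule out limits of the form $\Gamma\times\R^{n-1}$ with $\Gamma$ a union of rays, and reduces the remaining case to a cone with an isolated singularity, whose link is genuinely smooth and compact, before the maximum principle applies. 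Your scale-invariant estimate $\int_{\mathbf{B}_R\cap M}|A|^2\le CR^{n-2}$ is correct and is also used in the paper, but it yields neither smooth convergence nor any control of the singular set, so it cannot carry the weight you place on it.
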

One of the important ingredients in the proof of our theorem is to show that the Gauss map of $M$ takes actually values in a compact subset of an open hemisphere of $\S^n$
provided the support of one of tangent cones of $M$ at infinity is the Euclidean space.

\section{New bounded subharmonic functions on minimal hypersurfaces}

Let $\mathbf{P}$ denote the projection from $\S^n$ onto $\overline{\mathbb{D}^2}$ (2-dimensional closed unit disk) by
$$\mathbf{P}:\ \S^n\rightarrow \overline{\mathbb{D}^2}\qquad\qquad (x_1,\cdots, x_{n+1})\mapsto (x_1,x_2).$$
For any $x=(x_1,\cdots,x_{n+1})\in\S^n\setminus\{(x_1,\cdots,x_{n+1})\in\S^{n}|\ x_1=0,\ x_2=0\}$,
there is a polar coordinate system in $\overline{\mathbb{D}^2}$. Namely,
$$\mathbf{P}(x)=\left(r(x)\sin\th(x),r(x)\cos\th(x)\right).$$
with $r(x)=\sqrt{x_1^2+x_2^2}\in(0,1]$ and the unique $\th(x)\in[\f12\pi,\f52\pi)$. In other words, we have defined two functions $r,\th$ on $\S^n\setminus\{(x_1,\cdots,x_{n+1})\in\S^{n}|\ x_1=0,\ x_2=0\}$.
Let $\si_\S$ be the standard metric on $\S^n$, and Hess be the Hessian matrix on $\S^n$ with the respect to $\si_\S$.
From \cite{JXY}, we have
\begin{equation}\aligned\label{Hessr}
\mathrm{Hess}\ r=-r\si_\S+rd\th\otimes d\th,
\endaligned
\end{equation}
and
\begin{equation}\aligned\label{Hessth}
\mathrm{Hess}\ \th=-r^{-1}\left(dr\otimes d\th+d\th\otimes dr\right).
\endaligned
\end{equation}
Let $\overline{\S}^{n-1}_+$ be a hemisphere of $\S^{n-1}$ defined by
\begin{equation}\aligned\label{+Sn-1}
\{(x_1,\cdots,x_{n+1})\in\S^{n}|\ x_1=0,\ x_2\ge0\},
\endaligned
\end{equation}
and $K$ be a compact set in $\S^n\setminus\overline{\S}^{n-1}_+$.
Clearly, there is a constant $\de_K>0$ such that $r(x)\ge\de_K$ for all $x\in K$.

Let $B_{\tau}(\S^{n-2})$ denote the $\tau$-neighborhood of $\{(0,0,x_3,\cdots,x_{n+1})|\ x_3^2+\cdots+x_{n+1}^2=1\}$ in $\S^n$, i.e.,
$$B_\tau(\S^{n-2})=\{(x_1,\cdots,x_{n+1})\in\S^{n}|\ x_1^2+x_{2}^2<\sin^2\tau\}.$$
We choose $0<\tau<1/2$ sufficiently small such that $B_{\tau}(\S^{n-2})\cap K=\emptyset$.
For each pair $x_*,-x_*\in \S^n\setminus B_{\tau}(\S^{n-2})$,
let $\th_*\in[\f12\pi,\f32\pi)$ be a constant such that $\th(x_*)=\th_*$, and $\th(-x_*)=\th_*+\pi$. Then we define a function
\begin{equation}\aligned\label{phik}
\phi=\f{\de_K}r+\f{k}2\left(\th-\th_*-\f{\pi}2\right)^2\qquad\qquad \mathrm{on}\ \S^n\setminus B_{\tau}(\S^{n-2}),
\endaligned
\end{equation}
where $k$ is an arbitrary constant $\ge1$.
In particular, $\phi(x_*)=\phi(-x_*)$, and $\phi$ is smooth on $K$.
Combining \eqref{Hessr} and \eqref{Hessth}, we have
\begin{equation}\aligned\label{Hessphi000}
\mathrm{Hess}\ \phi=&\f{\de_K}r\si_\S-\f{\de_K}rd\th\otimes d\th+\f{2\de_K}{r^3}dr\otimes dr\\
&-kr^{-1}\left(\th-\th_*-\f{\pi}2\right)\left(dr\otimes d\th+d\th\otimes dr\right)+kd\th\otimes d\th.
\endaligned
\end{equation}
\begin{lemma}\label{subhar}
Let $K$ be a compact set in $\S^n\setminus\overline{\S}^{n-1}_+$. For each pair $x_*,-x_*\in \S^n\setminus B_{\tau}(\S^{n-2})$ with $K\cap B_{\tau}(\S^{n-2})=\emptyset$ for some $\tau>0$, there is a bounded function $\Th$ on $\S^n\setminus B_{\tau}(\S^{n-2})$
with $\Th(x_*)=\Th(-x_*)$ such that
$\Th$ is smooth strictly convex on $K$.
\end{lemma}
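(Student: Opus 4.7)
The plan is to take $\Theta=e^{c\phi}$ for the function $\phi$ from \eqref{phik} with $k\geq 1$ and a sufficiently large constant $c=c(K)$, then verify the four required conditions. The elementary properties are almost immediate: boundedness of $\phi$ on $\S^n\setminus B_\tau(\S^{n-2})$ follows from $r\geq\sin\tau$ (bounding $\delta_K/r$) and $(\theta-\theta_*-\pi/2)^2\leq 4\pi^2$, so $\Theta=e^{c\phi}$ is also bounded; smoothness of $\Theta$ on $K$ holds because $K\cap\overline\S^{n-1}_+=\emptyset$ allows a continuous branch of $\theta$ on a neighborhood of $K$; and $\Theta(x_*)=\Theta(-x_*)$ reduces to $\phi(x_*)=\phi(-x_*)$, which is immediate from $r(x_*)=r(-x_*)$ and the observation that the two shifted angles $\theta(x_*)-\theta_*-\pi/2=-\pi/2$ and $\theta(-x_*)-\theta_*-\pi/2=\pi/2$ have equal squares.

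The core task is strict convexity on $K$. Work in the local orthonormal frame $e_r=\sqrt{1-r^2}\,\partial_r$, $e_\theta=r^{-1}\partial_\theta$, supplemented by orthonormal directions $e_{\omega_1},\dots,e_{\omega_{n-2}}$ in the orthogonal complement; formula \eqref{Hessphi000} makes $\mathrm{Hess}\,\phi$ block diagonal, with the $e_\omega$-block equal to $(\delta_K/r)I>0$ and a nontrivial $2\times 2$ block in the $(e_r,e_\theta)$-plane. Completing the square in \eqref{Hessphi000} casts $\mathrm{Hess}\,\phi$ in the form
\[
\mathrm{Hess}\,\phi=\frac{\delta_K}{r}\,\sigma_\S+\frac{2\delta_K}{r^3}(dr-X\,d\theta)^2+Y\,d\theta^2,
\]
with $X=kr^2\eta/(2\delta_K)$, $Y=k-\delta_K/r-k^2 r\eta^2/(2\delta_K)$ and $\eta:=\theta-\theta_*-\pi/2$. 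The first two summands are positive semidefinite (the first strictly), so the only obstruction is the potentially negative $Y\,d\theta^2$ when $|\eta|$ is large. A direct calculation along any $V\in\ker d\phi$ (which forces $V\propto k\eta r\,e_r+\delta_K\sqrt{1-r^2}\,e_\theta$) produces an exact cancellation of the $k^2\eta^2$ contributions between the second and third summands above, leaving
\[
\mathrm{Hess}\,\phi(V,V)=\delta_K k^2\eta^2 r+\frac{\delta_K^2(1-r^2)}{r^2}\Bigl(k-\frac{\delta_K(1-r^2)}{r}\Bigr);
\]
both summands are nonnegative for $k\geq 1$ (using $\delta_K(1-r^2)/r<1$ on $r\in[\delta_K,1]$), and dividing by $|V|^2=k^2\eta^2 r^2+\delta_K^2(1-r^2)$ gives a uniform positive lower bound on $K$.

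The only zero of $\nabla\phi$ occurs at $r=1,\,\eta=0$, where $\mathrm{Hess}\,\phi$ itself is positive definite (in the $(e_r,e_\theta)$-plane it is diagonal with entries $\delta_K$ and $k$); hence the standard block diagonalization with respect to $T_p\S^n=\R\nabla\phi(p)\oplus\ker d\phi(p)$ applied to the identity $\mathrm{Hess}\,e^{c\phi}=e^{c\phi}\bigl(c\,\mathrm{Hess}\,\phi+c^2\,d\phi\otimes d\phi\bigr)$ forces $\mathrm{Hess}\,\Theta>0$ on $K$ for all sufficiently large $c$. The hardest step is the uniform positive lower bound for $\mathrm{Hess}\,\phi|_{\ker d\phi}$ above: the negative contribution of $Y$ must be absorbed into the positive contribution $(2\delta_K/r^3)(dr-X\,d\theta)^2$ along the constrained direction, and this absorption hinges on the specific coefficients $X$ and $Y$ produced by completing the square, which cause the $k^2\eta^2/(2\delta_K r)$ terms to cancel exactly.
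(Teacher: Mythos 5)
Your construction coincides with the paper's: the same $\phi$ from \eqref{phik}, and $\Theta$ equal to (a constant multiple of) $e^{\lambda\phi}$, with boundedness, smoothness on $K$, and the symmetry $\Theta(x_*)=\Theta(-x_*)$ handled identically. Where you genuinely diverge is the verification of strict convexity. The paper estimates $\mathrm{Hess}\,\Theta(\xi,\xi)=e^{\lambda\phi}\bigl(\mathrm{Hess}\,\phi(\xi,\xi)+\lambda|d\phi(\xi)|^2\bigr)$ directly for an arbitrary $\xi$, absorbing the cross term $-\tfrac{2k}{r}\tilde\theta\,dr(\xi)d\theta(\xi)$ by Cauchy--Schwarz against the $\lambda k^2\tilde\theta^2(d\theta(\xi))^2$ piece supplied by $|d\phi(\xi)|^2$, together with $|dr(\xi)|\le|\xi|$; this produces the explicit choice $\lambda\ge 2/\delta_K^2$ and the pointwise modulus $\mathrm{Hess}\,\Theta\ge e^{\lambda\phi}\tfrac{\delta_K}{2r}\sigma_{\S}$ that later feeds \eqref{subtv}. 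You instead complete the square in \eqref{Hessphi000} and test only on $\ker d\phi$, where the constraint $dr(V)=\tfrac{k\tilde\theta r^2}{\delta_K}d\theta(V)$ makes the $k^2\tilde\theta^2$ contributions cancel exactly, leaving $\mathrm{Hess}\,\phi(V,V)=\tfrac{\delta_K}{r}|V|^2+\bigl(k-\tfrac{\delta_K}{r}\bigr)(d\theta(V))^2\ge\delta_K|V|^2$ (your displayed expression is exactly this after substituting $d\theta(V)=\delta_K\sqrt{1-r^2}/r$); you then invoke the soft principle that $c\,\mathrm{Hess}\,\phi+c^2\,d\phi\otimes d\phi>0$ on $K$ for large $c$. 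I checked your algebra ($X$, $Y$, the cancellation, and the two nonnegative summands) and it is correct; the two proofs ultimately exploit the same cancellation, organized differently. The paper's route buys an explicit $\lambda$ and a quantitative convexity lower bound with no compactness argument; yours buys a cleaner explanation of why this particular $\phi$ works (convexity need only be checked transverse to $\nabla\phi$). One point worth making explicit in your final step: the ``sufficiently large $c$'' conclusion is not purely pointwise --- away from a neighborhood of the unique critical point $\{r=1,\ \tilde\theta=0\}$ you need $|\nabla\phi|$ bounded below on the compact set $K$ to dominate the off-diagonal term $\mathrm{Hess}\,\phi(\nabla\phi,w)$, while near that point you use continuity of the positive-definiteness of $\mathrm{Hess}\,\phi$ itself. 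You state both ingredients, so this is a matter of assembly rather than a gap.
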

\begin{proof}
Let $\phi$ be the function defined on $K$ as above with $\phi(x_*)=\phi(-x_*)$.
Enlightened by Lemma 2.1 in \cite{JXY}, we define $\Th=\la^{-1}e^{\la\phi}$ on $K$ with the positive constant $\la$ to be defined later.
Then $\phi(x_*)=\phi(-x_*)$ implies $\Th(x_*)=\Th(-x_*)$. By the definition of $\Th$, at any considered point $p\in\S^n$ we have
\begin{equation}\aligned\label{HessThxi}
\mathrm{Hess}\ \Th(\xi,\xi)=e^{\la\phi}\left(\mathrm{Hess}\ \phi(\xi,\xi)+\la|d\phi(\xi)|^2\right)
\endaligned
\end{equation}
for any unit vector $\xi\in T_p(\S^n)$. We denote $\tilde{\th}=\th-\th_*-\f{\pi}2$ for convenience. Combining \eqref{Hessphi000}, we have
\begin{equation}\aligned\label{laHessThxi}
e^{-\la\phi}\mathrm{Hess}\ \Th(\xi,\xi)=&\f{\de_K}r+\left(k-\f{\de_K}r\right)\left(d\th(\xi)\right)^2+\f{2\de_K}{r^3}\left(dr(\xi)\right)^2-\f{2k}{r}\tilde{\th}dr(\xi)d\th(\xi)\\
&+\la\left|\f{\de_K}{r^2}dr(\xi)-k\tilde{\th}d\th(\xi)\right|^2\\
=&\f{\de_K}r+\left(\la k^2\tilde{\th}^2+k-\f{\de_K}r\right)\left(d\th(\xi)\right)^2+\left(\f{2\de_K}{r^3}+\la\f{\de_K^2}{r^4}\right)\left(dr(\xi)\right)^2\\
&-2\left(\f{1}{r}+\la\f{\de_K}{r^2}\right)k\tilde{\th}dr(\xi)d\th(\xi).
\endaligned
\end{equation}
From Cauchy-Schwartz inequality, we have
\begin{equation}\aligned
2\left(\f{1}{r}+\la\f{\de_K}{r^2}\right)k\left|\tilde{\th}dr(\xi)d\th(\xi)\right|\le& \la k^2\tilde{\th}^2\left(d\th(\xi)\right)^2+\left(\f{1}{\sqrt{\la}r}+\sqrt{\la}\f{\de_K}{r^2}\right)^2\left(dr(\xi)\right)^2\\
=&\la k^2\tilde{\th}^2\left(d\th(\xi)\right)^2+\left(\f{1}{\la r^2}+\f{2\de_K}{r^3}+\la\f{\de_K^2}{r^4}\right)\left(dr(\xi)\right)^2.
\endaligned
\end{equation}
Substituting the above inequality into \eqref{laHessThxi} infers
\begin{equation}\aligned\label{HessTh***}
e^{-\la\phi}\mathrm{Hess}\ \Th(\xi,\xi)\ge&\f{\de_K}r+\left(k-\f{\de_K}r\right)\left(d\th(\xi)\right)^2-\f1{\la r^2}\left(dr(\xi)\right)^2\ge\f{\de_K}r-\f1{\la r^2}\left(dr(\xi)\right)^2
\endaligned
\end{equation}
as $k\ge1$.

In the upper hemisphere
\begin{equation}\aligned\label{UppSn-1}
\{(x_1,\cdots,x_{n+1})\in\S^{n}|\ x_{n+1}>0\},
\endaligned
\end{equation}
let $T$ denote a tensor defined by
\begin{equation}\aligned
T=\sum_{i,j=1}^n\f{x_ix_j}{1-\sum_{k=1}^n|x_k|^2}dx_i\otimes dx_j.
\endaligned
\end{equation}
Then the metric $\si_\S$ can be written as
\begin{equation}\aligned
\sum_{i=1}^ndx_i\otimes dx_i+T=dr\otimes dr+r^2d\th\otimes d\th+\sum_{i=3}^ndx_i\otimes dx_i+T.
\endaligned
\end{equation}
Since all the eigenvalues of $T$ are nonnegative, then $\si_\S\left(\f{\p}{\p r},\f{\p}{\p r}\right)\ge1$.
Hence at any point $q$ in \eqref{UppSn-1} and $q\in K$, we have
\begin{equation}\aligned\label{drxile1}
\sup_{\e\in T_q(\S^n)}|dr(\e)|\le|\e|.
\endaligned
\end{equation}
Similarly, \eqref{drxile1} holds at any $q\in \{(x_1,\cdots,x_{n+1})\in\S^{n}|\ x_{n+1}<0\}\cap K$. By the differentiability of $r$ on $\S^n\setminus\{(x_1,\cdots,x_{n+1})\in\S^{n}|\ x_1=0,\ x_2=0\}$, \eqref{drxile1} holds at any $q\in K$.
Therefore, if we choose $\la\ge\f2{\de_K^2}$, then from \eqref{HessTh***} we get
\begin{equation}\aligned
\mathrm{Hess}\ \Th(\xi,\xi)\ge e^{\la\phi}\f{\de_K}{2r}\qquad \qquad \mathrm{on} \ \ K.
\endaligned
\end{equation}
This completes the proof.
\end{proof}

Let $\De_{\S^n}$ and $\na_{\S^n}$ be the Laplacian and Levi-Civita connection on $\S^n$ with the respect to the metric $\si_\S$, respectively.
From Lemma \ref{subhar}, there is a constant $\k>0$ depending only on $n,K$ such that the Hessian of $\Th$ on $K\subset\S^n$ satisfies
\begin{equation}\aligned\label{subtv}
\mathrm{Hess}\,\Th\ge\k.
\endaligned
\end{equation}
By the construction of $\Th$, there is a constant $c_K$ depending only on $n,K$ such that
\begin{equation}\aligned\label{vnu}
|\Th(\nu)-\Th(\nu')|\le c_K|\nu-\nu'|
\endaligned
\end{equation}
for any $\nu,\nu'\in K$ from Newton-Leibnitz formula.

Let $M$ be an $n$-dimensional complete oriented smooth minimal hypersurface in $\R^{n+1}$.
Namely, there are a smooth $n$-dimensional Riemannian manifold $M'$, and an isometric mapping $X:\ M'\rightarrow M$ with $X(M')=M\subset\R^{n+1}$.
Let $\g: M'\rightarrow\S^n$ be the Gauss map defined by $\g(p)=X_*(T_pM')\in\S^n$ via the parallel translation in $\R^{n+1}$ for all $p\in M'$.
For convenience, we identify $M$ and $M'$ by viewing $X(p)$ as $p$.
Suppose that the Gauss image of $M$ satisfies $\g(M)\subset K$, Then we define the function $v=\Th\circ\g$ on $M$.
Let $\na$ be the Levi-Civita connection of $M$ with the induced metric from $\R^{n+1}$.
We choose a local orthonormal tangent frame $\{e_i\}$ on $M$ such that $\na e_i=0$ at the considered point. Then (see formula (2.9) in \cite{DXY} for instance)
\begin{equation}\aligned\label{|A|2***}
\sum_{i=1}^n\lan\g_*e_i,\g_*e_i\ran=|A|^2.
\endaligned
\end{equation}
As $\g$ is a harmonic map from $M$ to $\S^n$, combining \eqref{subtv} we get
\begin{equation}\aligned\label{subv}
\De_M v=\sum_{i=1}^n\mathrm{Hess}\, \Th(\g_*e_i,\g_*e_i)\ge \k\sum_{i=1}^n\lan\g_*e_i,\g_*e_i\ran=\k|A|^2,
\endaligned
\end{equation}
where $\De_{M}$ is the Laplacian of $M$ with the induced metric from $\R^{n+1}$, $A$ is the second fundamental form of $M$. In particular, for any $x_*,-x_*\in\overline{\g(M)}\subset K$, we can assume $\Th(x_*)=\Th(-x_*)$.

We say that $M$ is \emph{$\de$-stable} if
\begin{equation}\aligned\label{destable}
\int_{M}\left(|\na f|^2-\de|A|^2f^2\right)\ge0
\endaligned
\end{equation}
for each smooth function $f: M\rightarrow\R$ with compact support.
\begin{lemma}\label{dekstable}
Let $M$ be an $n$-dimensional complete oriented minimal hypersurface in $\R^{n+1}$. If there is a bounded function $v$ on $M$ satisfying \eqref{subv}
for some positive number $\k>0$, then $M$ is $\de_\k$-stable for some $\de_\k>0$.
\end{lemma}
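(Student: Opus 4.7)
The plan is to extract the stability inequality directly from the pointwise bound $\De_M v\ge\k|A|^2$, using only that $v$ is bounded. The approach is a Caccioppoli-type absorbing argument applied to $v^2$.

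First I would normalize. Replacing $v$ by $v-\inf_M v$ does not alter $\De_M v$ since constants are harmonic, so I may assume $0\le v\le C$ on $M$, where $C:=\sup_M v-\inf_M v$ is the oscillation of the original $v$. In particular $v\ge0$ and $\De_M v\ge0$.

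Next I would establish the auxiliary Caccioppoli estimate
\begin{equation*}
\int_M f^2|\na v|^2\le 4C^2\int_M|\na f|^2\qquad\text{for all }f\in C_c^\infty(M).
\end{equation*}
The key trick is to work with $v^2$ rather than $v$: since $v\ge0$ and $\De_M v\ge0$,
\begin{equation*}
\De_M v^2=2v\,\De_M v+2|\na v|^2\ge 2|\na v|^2.
\end{equation*}
Multiplying by $f^2$ and integrating by parts once yields $2\int_M f^2|\na v|^2\le -4\int_M fv\,\na f\cdot\na v$, and applying Cauchy--Schwarz together with the bound $v\le C$ produces a quadratic inequality in $\bigl(\int_M f^2|\na v|^2\bigr)^{1/2}$ that solves to the displayed estimate.

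Finally I would conclude stability. Multiplying $\De_M v\ge\k|A|^2$ by $f^2$ and integrating by parts gives
\begin{equation*}
\k\int_M f^2|A|^2\le -2\int_M f\,\na f\cdot\na v,
\end{equation*}
and one more Cauchy--Schwarz combined with the Caccioppoli estimate yields $\k\int_M f^2|A|^2\le 4C\int_M|\na f|^2$. Hence $M$ is $\de_\k$-stable with $\de_\k=\f{\k}{4C}$.

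The only delicate step is the Caccioppoli absorption: without passing to $v^2$, there is no mechanism to produce the term $|\na v|^2$ from the given data in a form that can be absorbed against $\int|\na f|^2$. Once that estimate is in hand, the stability inequality drops out from a single Cauchy--Schwarz and requires no further information about $v$ beyond its boundedness and the hypothesis $\De_M v\ge\k|A|^2$.
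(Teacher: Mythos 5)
Your argument is correct: the normalization is harmless, the identity $\De_M v^2=2v\,\De_M v+2|\na v|^2\ge 2|\na v|^2$ is valid once $v\ge0$ and $\De_M v\ge0$, the absorption $\int f^2|\na v|^2\le 4C^2\int|\na f|^2$ follows from the quadratic inequality as you describe, and the final Cauchy--Schwarz gives $\k\int f^2|A|^2\le 4C\int|\na f|^2$, i.e.\ $\de_\k$-stability with $\de_\k=\f{\k}{4C}$. The route is genuinely different from the paper's. The paper sets $\tilde v=v_{sup}-v$, observes $\de_\k|A|^2\le-\tilde v^{-1}\De_M\tilde v$ with $\de_\k=\k/(v_{sup}-v_{inf})$, and then runs the classical positive-supersolution argument: test against $\phi^2/\tilde v$ and complete the square $|\na\phi-\f{\phi}{\tilde v}\na\tilde v|^2\ge0$. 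That yields the slightly better constant $\de_\k=\k/C$ in a single integration by parts, but it requires $\tilde v>0$ everywhere (if $v$ attained its supremum one would have to perturb by $\ep$ or invoke the strong maximum principle to reduce to the trivial constant case). Your two-step Caccioppoli argument on $v^2$ sidesteps that division entirely and uses nothing beyond $0\le v\le C$ and $\De_M v\ge\k|A|^2$, at the cost of a factor $4$ in the constant, which is irrelevant for the statement. Either proof is acceptable.
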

\begin{proof}
Let $v_{sup}$ and $v_{inf}$ be the supremum and infimum of $v$ on $M$, respectively. From \eqref{subv}, one has
\begin{equation}\aligned
\k|A|^2\le-\De_M(v_{sup}-v)\le-\f{v_{sup}-v_{inf}}{v_{sup}-v}\De_M(v_{sup}-v).
\endaligned
\end{equation}
Taking $\de_\k=\f{\k}{v_{sup}-v_{inf}}$, $\tilde{v}=v_{sup}-v$, then
\begin{equation}\aligned
\de_\k|A|^2\le-\f{1}{\tilde{v}}\De_M\tilde{v}.
\endaligned
\end{equation}
For any smooth function $\phi$ on $M$ with compact support, we have (see also the proof of proposition 6.2.2 of \cite{X})
\begin{equation}\aligned
&\int_M\left(|\na\phi|^2-\de_\k|A|^2\phi^2\right)\ge\int_M\left(|\na\phi|^2+\f{\phi^2}{\tilde{v}}\De_M\tilde{v}\right)\\
=&\int_M\left(|\na\phi|^2-\na\tilde{v}\cdot\na\left(\f{\phi^2}{\tilde{v}}\right)\right)
=\int_M\left(|\na\phi|^2-2\f{\phi}{\tilde{v}}\na\tilde{v}\cdot\na\phi+\f{\phi^2}{\tilde{v}^2}|\na\tilde{v}|^2\right)\ge0,
\endaligned
\end{equation}
where we have used Cauchy-Schwartz inequality in the last step. Namely, $M$ is a smooth $\de_\k$-stable minimal hypersurface in $\R^{n+1}$.
\end{proof}
As a corollary, an $n$-dimensional complete oriented minimal hypersurface $M$ in $\R^{n+1}$ is $\de$-stable for some $\de>0$
provided the Gauss image of $M$ is contained in $K$, where $K$ is defined as above.

\section{Rigidity of minimal hypersurfaces with constraints}

Let $\mathbf{B}_r(X)$ denote the ball in $\R^{n+1}$ with radius $r>0$ and centered at $X\in\R^{n+1}$. Denote $\mathbf{B}_r=\mathbf{B}_r(0)$ for simplicity.
Let $M$ be a smooth oriented complete minimal hypersurface in $\R^{n+1}$ with Euclidean volume growth and $\g(M)\subset K$,
where $K$ is a compact set in $\S^n\setminus\overline{\S}^{n-1}_+$ and $\overline{\S}^{n-1}_+$ is defined as \eqref{+Sn-1}.
Now we assume that the support of one of tangent cones of $M$ at infinity is an $n$-Euclidean space.
Namely, there is a sequence $R_i\rightarrow\infty$ such that $\f1{R_i}M$ converges to an integer varifold $T$ with the support spt$T$ being the $n$-Euclidean space.
Let $\nu_*,-\nu_*$ denote the unit normal vectors of spt$T$.
From the definition of the function $\th$ in the last chapter, without loss of generality, we assume that $\th(\nu_*)\in[\f12\pi,\f32\pi)$.
Let $\nu$ denote the unit normal vector field of $M$.
From \cite{SS} (or 22.2 in \cite{S}), the \emph{unoriented excess} satisfies
\begin{equation}\aligned
\lim_{i\rightarrow\infty}R_i^{-n}\int_{\mathbf{B}_{R_i}\cap M}\left(1-\lan\nu,\nu_*\ran^2\right)=0.
\endaligned
\end{equation}
It is easy to see
\begin{equation}\aligned\label{inenunu*}
\min\{1-\lan\nu,\nu_*\ran,1+\lan\nu,\nu_*\ran\}\le1-\lan\nu,\nu_*\ran^2.
\endaligned
\end{equation}
With Cauchy inequality, we have
\begin{equation}\aligned\label{asympnu*}
\lim_{i\rightarrow\infty}R_i^{-n}\int_{\mathbf{B}_{R_i}\cap M}\min\{|\nu-\nu_*|,|\nu+\nu_*|\}=0.
\endaligned
\end{equation}
From Lemma \ref{subhar},
there is a bounded function $\Th$ on $\S^n\setminus B_{\tau}(\S^{n-2})$ with $\Th(\nu_*)=\Th(-\nu_*)$ and $B_{\tau}(\S^{n-2})\cap K=\emptyset$
such that
$\Th$ is smooth strictly convex on $K$. Put $\g:\ M\rightarrow\S^n$ be the Gauss map and $v=\Th\circ\g$ as before.
Denote $v_*=\Th(\nu_*)$.

\begin{lemma}\label{Asympv*}
We have
\begin{equation}\aligned\label{asympv*}
\lim_{i\rightarrow\infty}R_{i}^{-n}\int_{\mathbf{B}_{R_{i}}\cap M}|v-v_*|=0.
\endaligned
\end{equation}
\end{lemma}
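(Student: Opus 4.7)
The plan is to combine the Lipschitz continuity of $\Th$ established in \eqref{vnu}, the symmetry $\Th(\nu_*)=\Th(-\nu_*)=v_*$ built into the construction, and the unoriented excess decay \eqref{asympnu*}, to reduce the integral of $|v-v_*|$ to the integral of $\min\{|\nu-\nu_*|,|\nu+\nu_*|\}$.

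First I would check that $\Th$ is Lipschitz on the compact set $\S^n\setminus B_\tau(\S^{n-2})$: indeed $\Th=\lambda^{-1}e^{\lambda\phi}$ and $\phi=\delta_K/r+\tfrac{k}{2}\tilde\th^2$ are smooth on this set, since $r\ge\sin\tau>0$ there. This upgrades \eqref{vnu} (a priori stated only for $\nu,\nu'\in K$) to a Lipschitz estimate
\[|\Th(\nu)-\Th(\nu')|\le c_K|\nu-\nu'|\qquad\text{for all }\nu,\nu'\in\S^n\setminus B_\tau(\S^{n-2}),\]
after enlarging $c_K$ if necessary. Note that $\nu_*\in\overline{\g(M)}\subset K$ (taking $K$ closed) and by the reflection symmetry of $B_\tau(\S^{n-2})$ through the origin, $-\nu_*\in\S^n\setminus B_\tau(\S^{n-2})$ as well; hence both $\pm\nu_*$ lie in the domain where $\Th$ is Lipschitz.

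Next, for any $p\in M$ one has $\nu(p)\in K\subset\S^n\setminus B_\tau(\S^{n-2})$. Applying the Lipschitz bound twice, once to the pair $(\nu(p),\nu_*)$ and once to the pair $(\nu(p),-\nu_*)$, and using $\Th(\nu_*)=\Th(-\nu_*)=v_*$, I obtain
\[|v(p)-v_*|=|\Th(\nu(p))-\Th(\nu_*)|\le c_K|\nu(p)-\nu_*|,\]
\[|v(p)-v_*|=|\Th(\nu(p))-\Th(-\nu_*)|\le c_K|\nu(p)+\nu_*|.\]
Taking the minimum gives the pointwise estimate $|v-v_*|\le c_K\min\{|\nu-\nu_*|,|\nu+\nu_*|\}$ on all of $M$.

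Finally, integrating over $\mathbf{B}_{R_i}\cap M$, multiplying by $R_i^{-n}$, and invoking \eqref{asympnu*} yields $\lim_{i\to\infty}R_i^{-n}\int_{\mathbf{B}_{R_i}\cap M}|v-v_*|=0$, which is precisely \eqref{asympv*}. The only mild obstacle here is the bookkeeping point that $-\nu_*$ need not lie in $K$ itself; but this is dissolved the moment one observes that $\Th$ was constructed on the larger set $\S^n\setminus B_\tau(\S^{n-2})$ on which it is smooth, so the Lipschitz estimate is available at both $\nu_*$ and $-\nu_*$.
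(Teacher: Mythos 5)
Your strategy is the paper's: use $\Th(\nu_*)=\Th(-\nu_*)=v_*$ together with a Lipschitz--type bound on $\Th$ to dominate $|v-v_*|$ pointwise by a multiple of $\min\{|\nu-\nu_*|,|\nu+\nu_*|\}$ on $\g(M)$, then integrate and invoke \eqref{asympnu*}. The final pointwise inequality you write down is in fact true, and the integration step is fine.

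The problem is the justification of that inequality. You assert that $\phi=\de_K/r+\tfrac k2\tilde\th^2$, hence $\Th$, is smooth on all of $\S^n\setminus B_\tau(\S^{n-2})$ because $r\ge\sin\tau$ there. This overlooks the angular factor: the paper defines $\th$ as the \emph{unique} value in $[0,2\pi)$, so $\th$ has a jump discontinuity across $\overline{\S}^{n-1}_+\setminus B_\tau(\S^{n-2})$ (where $\th$ passes from $2\pi^-$ to $0$), and $(\th-\th_*-\pi/2)^2$ jumps there too. That is exactly why Lemma \ref{subhar} claims only that $\Th$ is \emph{bounded} on $\S^n\setminus B_\tau(\S^{n-2})$ and smooth strictly convex on $K$. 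Consequently the ``upgraded'' global Lipschitz estimate you rely on is false (take two nearby points on opposite sides of the cut), and your application of it to the pair $(\nu(p),-\nu_*)$ is unjustified when $-\nu_*\notin K$ --- indeed $-\nu_*$ may sit on the cut itself. The repair is the paper's case analysis, which never uses $\Th$ off $K$: at least one of $\pm\nu_*$ must lie in $\overline{\g(M)}\subset K$ (otherwise $\min\{|\nu-\nu_*|,|\nu+\nu_*|\}$ would be bounded below on $\g(M)$, contradicting \eqref{asympnu*} and the lower volume bound), and there \eqref{vnu} applies directly. If the other point, say $\nu_*$, is not in $K$, then $d:=\mathrm{dist}(\nu_*,K)>0$, so $|\nu-\nu_*|\ge d$ for all $\nu\in\g(M)$ and hence $\min\{|\nu-\nu_*|,|\nu+\nu_*|\}\ge\tfrac d2|\nu+\nu_*|$; thus \eqref{asympnu*} already forces $R_i^{-n}\int_{\mathbf{B}_{R_i}\cap M}|\nu+\nu_*|\to0$, and one concludes using only the Lipschitz bound at $-\nu_*\in K$. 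With that substitution your argument coincides with the paper's proof.
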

\begin{proof}
Let us prove it by dividing into 3 cases.

Case 1: $\nu_*,-\nu_*\in\overline{\g(M)}\subset K$.
From \eqref{vnu}, we have
\begin{equation}\aligned\label{vnu*}
|\Th(\nu)-\Th(\nu_*)|=|\Th(\nu)-\Th(-\nu_*)|\le c_K\min\{|\nu-\nu_*|,|\nu+\nu_*|\}.
\endaligned
\end{equation}
Combining \eqref{asympnu*} and \eqref{vnu*}, we can get \eqref{asympv*}.

Case 2: $-\nu_*\in K$ and $\nu_*\in\S^n\setminus K$, then \eqref{asympnu*} infers
\begin{equation}\aligned\label{asympnu*2}
\lim_{i\rightarrow\infty}R_i^{-n}\int_{\mathbf{B}_{R_i}\cap M}|\nu+\nu_*|=0.
\endaligned
\end{equation}
From \eqref{vnu} and $\Th(\nu_*)=\Th(-\nu_*)$, one has $|\Th(\nu)-\Th(\nu_*)|\le c_K|\nu+\nu_*|$ for any $\nu\in K$.
Combining \eqref{asympnu*2}, we can get \eqref{asympv*}.

Case 3: $\nu_*\in K$ and $-\nu_*\in\S^n\setminus K$, then \eqref{asympnu*} infers
\begin{equation}\aligned\label{asympnu*1}
\lim_{i\rightarrow\infty}R_i^{-n}\int_{\mathbf{B}_{R_i}\cap M}|\nu-\nu_*|=0.
\endaligned
\end{equation}
Combining \eqref{vnu} and \eqref{asympnu*1}, we can get \eqref{asympv*} analog to the case 2, and complete the proof.
\end{proof}

\begin{lemma}
The supermum of the function $v$ on $M$ is $v_*$.
\end{lemma}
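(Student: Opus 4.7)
The plan is to establish both $\sup_M v \leq v_*$ and $\sup_M v \geq v_*$.

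\emph{Upper bound.} Since $v$ is smooth and $\De_M v\geq \k|A|^2\geq 0$ by \eqref{subv}, the shifted function $v-v_*$ is subharmonic on $M$, and hence so is its positive part $w := \max(v-v_*,0)$ (as the maximum of two subharmonic functions). Because $M$ is smooth and minimal in $\R^{n+1}$, every point of $M$ has density $1$; this combined with the standard monotonicity of $R\mapsto \omega_n^{-1}R^{-n}\int_{\mathbf{B}_R(p)\cap M}w$ for nonnegative subharmonic $w$ on a minimal submanifold gives the mean value inequality
\[
w(p)\leq \frac{1}{\omega_n R^n}\int_{\mathbf{B}_R(p)\cap M} w
\]
for every $p\in M$ and $R>0$. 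Fix $p\in M$ and choose $R=R_i-|p|$ for $R_i>|p|$. Then $\mathbf{B}_{R_i-|p|}(p)\subset \mathbf{B}_{R_i}(0)$, and using $w\leq |v-v_*|$,
\[
(v(p)-v_*)_+\leq \frac{1}{\omega_n(R_i-|p|)^n}\int_{\mathbf{B}_{R_i}\cap M}|v-v_*|\longrightarrow 0
\]
as $i\to\infty$ by Lemma \ref{Asympv*}. Hence $v(p)\leq v_*$ for every $p\in M$, so $\sup_M v\leq v_*$.

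\emph{Lower bound.} The monotonicity formula on minimal submanifolds forces $|M\cap \mathbf{B}_{R_i}|\geq \omega_n R_i^n$, so dividing \eqref{asympnu*} by this volume shows that the average of $\min\{|\nu-\nu_*|,|\nu+\nu_*|\}$ over $M\cap \mathbf{B}_{R_i}$ tends to zero. Thus a sequence $x_j\in M$ can be extracted along which $\g(x_j)\to \nu_*$ or $\g(x_j)\to -\nu_*$. Whichever of $\pm\nu_*$ arises as the limit must lie in $\overline{\g(M)}\subset K$, where $\Th$ is smooth, hence continuous; combined with $\Th(\nu_*)=\Th(-\nu_*)=v_*$, this gives $v(x_j)=\Th(\g(x_j))\to v_*$, and therefore $\sup_M v\geq v_*$.

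\emph{Main obstacle.} The only non-routine step is justifying the mean value inequality for the Lipschitz, but not $C^2$, function $w=(v-v_*)_+$. The cleanest route is to invoke the distributional form of the monotonicity formula for subharmonic functions on minimal submanifolds; alternatively, one regularizes $w$ by a smooth approximation $\max_\ep(v-v_*,0)$, applies the inequality, and passes to the limit. The conceptual idea of the proof is to promote the averaged $L^1$ decay of $|v-v_*|$ furnished by Lemma \ref{Asympv*} to the pointwise bound $v\leq v_*$ via the mean value property.
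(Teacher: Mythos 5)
Your proof is correct, and it reaches the conclusion by a genuinely more direct route than the paper. The paper argues by contradiction: assuming $\sup_M v>v_*$, it forms the bounded subharmonic function $v_\ep=(\sup_Mv+\ep-v)^{-1}$, applies the mean value inequality (obtained from Euclidean volume growth, volume doubling and the Sobolev inequality, with a non-sharp constant $c_M\r^{-n}$) at a near-maximum point, splits the resulting integral using Lemma \ref{Asympv*}, and derives a contradiction for small $\ep$. You instead apply the mean value inequality directly to $w=(v-v_*)_+$ and let the $L^1$ decay of Lemma \ref{Asympv*} kill the average; this avoids the $\ep$-bookkeeping entirely, at the cost of having to justify the mean value inequality for a Lipschitz rather than $C^2$ subharmonic function. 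You correctly flag this as the only delicate point, and your regularization fix is airtight: for a smooth convex nondecreasing $\eta_\ep$ approximating $t\mapsto t_+$ from below, $\eta_\ep(v-v_*)$ is smooth, nonnegative and subharmonic since $\De_M\eta_\ep(v-v_*)=\eta_\ep'\De_Mv+\eta_\ep''|\na v|^2\ge0$, so the smooth mean value inequality applies and one passes to the limit. (The sharp constant $\om_n^{-1}$ is not needed; the paper's $c_M$ would do, since only the vanishing of the limit matters.) Two further remarks: your separate proof of $\sup_Mv\ge v_*$ actually fills a gap in the paper, whose written proof only rules out $\sup_Mv>v_*$; and in that step the continuity of $\Th$ along $\g(x_j)\to\pm\nu_*$ is most cleanly justified by the Lipschitz bound \eqref{vnu} on $K$ together with $\Th(\nu_*)=\Th(-\nu_*)$, exactly as in the case analysis of Lemma \ref{Asympv*}.
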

\begin{proof}
From Euclidean volume growth of $M$ and monotonicity of $\r^{-n}\mathcal{H}^n\left(M\cap\mathbf{B}_{\r}(p)\right)$,
$M\cap\mathbf{B}_{\r}(p)$ has volume doubling condition (independent of $p,\r$).
Combining Sobolev inequality on $M$ (see \cite{S}),
there is a positive constant $c_M$ depending only on $n$ and the limit of $\r^{-n}\mathcal{H}^n\left(M\cap\mathbf{B}_{\r}(p)\right)$
such that for each $p\in M$, each smooth nonnegative subharmonic function $f$ on $M$, one has the mean value inequality
\begin{equation}\aligned\label{subf}
f(p)\le\f{c_M}{\r^n}\int_{M\cap\mathbf{B}_{\r}(p)} f.
\endaligned
\end{equation}
Now let us prove the lemma by contradiction. Namely, we assume $\sup_Mv>v_*$. For each $0<\ep<\f12\left(\sup_Mv-v_*\right)$, we define
$$v_\ep=\left(\sup_Mv+\ep-v\right)^{-1}.$$
Then
\begin{equation}\aligned
\De_M v_\ep\ge v_\ep^2\De_M v\ge0.
\endaligned
\end{equation}
There is a point $p\in M$ such that $v(p)\ge\sup_Mv-\ep$. From \eqref{subf}, we have
\begin{equation}\aligned\label{vepvep}
&\f1{2\ep}\le v_\ep(p)\le\f{c_M}{\r^n}\int_{M\cap\mathbf{B}_{\r}(p)}v_\ep\\
=&\f{c_M}{\r^n}\int_{M\cap\mathbf{B}_{\r}(p)}\left(\f1{\sup_Mv+\ep-v}-\f1{\sup_Mv+\ep-v_*}+\f1{\sup_Mv+\ep-v_*}\right)\\
\le&\f{c_M}{\r^n}\int_{M\cap\mathbf{B}_{\r}(p)}\left(\f{v-v_*}{\left(\sup_Mv+\ep-v\right)\left(\sup_Mv+\ep-v_*\right)}+\f1{\sup_Mv-v_*}\right)\\
\le&\f{c_M}{\ep^2\r^n}\int_{M\cap\mathbf{B}_{\r}(p)}|v-v_*|+\f{c_M}{\sup_Mv-v_*}\r^{-n}\mathcal{H}^n\left(M\cap\mathbf{B}_{\r}(p)\right).
\endaligned
\end{equation}
Hence combining \eqref{asympv*}, one has
\begin{equation}\aligned
\f1{2\ep}\le&\lim_{R_i\rightarrow\infty}\f{c_M}{\ep^2R_i^n}\int_{M\cap\mathbf{B}_{R_i}(p)}|v-v_*|
+\f{c_M}{\sup_Mv-v_*}\lim_{\r\rightarrow\infty}\r^{-n}\mathcal{H}^n\left(M\cap\mathbf{B}_{\r}(p)\right)\\
=&\f{c_M}{\sup_Mv-v_*}\lim_{\r\rightarrow\infty}\r^{-n}\mathcal{H}^n\left(M\cap\mathbf{B}_{\r}(p)\right).
\endaligned
\end{equation}
The above inequality fails for the sufficiently small $\ep>0$. Hence we complete the proof.
\end{proof}

For each small $t>0$, we define a closed set $E_t\in\overline{\mathbb{D}^2}$ by
$$E_t=\{(r\sin\th,r\cos\th)|\ r\in[\de_K,1],\ \th_*-t\le\th\le\th_*+\pi+t\}.$$
We define a closed set $\widetilde{E}_t\subset\S^n$ as the inverse image of $E_t$ under the mapping $\mathbf{P}$, i.e.,
\begin{equation}\aligned\label{EtEt}
\widetilde{E}_t=\mathbf{P}^{-1}(E_t).
\endaligned
\end{equation}
From $v_*=\sup_M v$, we get $\Th(\nu_*)=\Th(-\nu_*)=\sup_{\g(M)}\Th$. With the definition of $\phi$ in \eqref{phik},
for each small $t>0$ we have $\g(M)\subset\widetilde{E}_t$ when $k\ge2$ is sufficiently large, which implies
$$\g(M)\subset\lim_{t\rightarrow0}\widetilde{E}_t.$$
In particular, $\g(M)$ is contained in a closed hemi-sphere of $\S^n$, denoted by
$$\{\xi\in\S^n|\ \lan\xi,\nu_0\ran\ge0\}$$
for the unique unit vector $\nu_0\in\S^n$ with $\mathbf{P}(\nu_0)=(\sin(\th(\nu_*)+\pi/2),\cos(\th(\nu_*)+\pi/2))$.
From the well-known formula (see formula (1.3.8) in \cite{X} for instance)
\begin{equation}\aligned\label{nunu0}
\De_M\lan\nu,\nu_0\ran=-|A|^2\lan\nu,\nu_0\ran,
\endaligned
\end{equation}
and the strong maximum principle, we have $\lan\nu,\nu_0\ran>0$ on $M$,
or $\lan\nu,\nu_0\ran\equiv0$ on $M$.
However, the later case only occurs for $M$ being a Euclidean space. Now we consider the case $\lan\nu,\nu_0\ran>0$ on $M$. Hence, it follows that
$$\g(M)\subset K\cap\{\xi\in\S^n|\ \lan\xi,\nu_0\ran>0\}.$$
From \eqref{nunu0} and the argument of Lemma \ref{dekstable}, $M$ is a stable minimal hypersurface in $\R^{n+1}$.
With \cite{SSY}, we have gotten the flatness of $M$ for $n\le 5$. For general $n$, let us prove the flatness by constructing new bounded subharmonic functions on $M$.

Recall
$$B_\tau(\S^{n-2})=\{(x_1,\cdots,x_{n+1})\in\S^{n}|\ x_1^2+x_{2}^2<\sin^2\tau\}.$$
We fix the sufficiently small constant $\tau\in(0,1/2)$ such that $B_\tau(\S^{n-2})\cap K=\emptyset$.
Let $\Om$ be the closed set in $\S^n$ defined by
\begin{equation}\aligned\label{Om}
\{\xi\in\S^n|\ \lan\xi,\nu_0\ran\ge0\}\setminus B_\tau(\S^{n-2}).
\endaligned
\end{equation}
Then $\g(M)\subset\Om.$
\begin{lemma}\label{HessLa***}
For any positive constant $\tau_n<1$, there is a positive smooth function $\La$ on $\Om$ such that
\begin{equation}\aligned\label{HessLa}
\mathrm{Hess}\ \La\le-(1-\tau_n)\La\si_{\S}\qquad \qquad \mathrm{on}\ \Om.
\endaligned
\end{equation}
\end{lemma}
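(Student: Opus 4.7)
The plan is to exploit the warped-product structure of $\S^n$ adapted simultaneously to $\nu_0$ and $\S^{n-2}$, and to take a product ansatz that is a mild perturbation of the first spherical harmonic $\lan\xi,\nu_0\ran$. I introduce coordinates $(\psi,\bar\theta,\phi)$ on $\S^n\setminus\S^{n-2}$, where $\psi\in(0,\pi/2]$ is the geodesic distance from $\S^{n-2}$, $\bar\theta=\th-\th_*-\pi/2$ is the $(x_1,x_2)$-polar angle of Section 2 shifted so that $\nu_0$ sits at $\bar\theta=0$, and $\phi\in\S^{n-2}$. In these coordinates the round metric becomes the warped product $\si_{\S}=d\psi^2+\sin^2\psi\,d\bar\theta^2+\cos^2\psi\,g_{\S^{n-2}}$, the identity $\lan\xi,\nu_0\ran=\sin\psi\cos\bar\theta$ holds, and $\Om$ is the slab $\{\psi\in[\tau,\pi/2],\ \bar\theta\in[-\pi/2,\pi/2],\ \phi\in\S^{n-2}\}$. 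I would try the ansatz $\La=\sin\psi\cos(\be\bar\theta)$ for a parameter $\be\in(0,1)$ to be chosen at the end; positivity on $\Om$ is automatic once $\be<1$, since $\sin\psi\ge\sin\tau>0$ and $|\be\bar\theta|\le\be\pi/2<\pi/2$. The motivation is that $\be=1$ recovers $\lan\xi,\nu_0\ran$, which realizes the equality $\mathrm{Hess}\,\La=-\La\si_{\S}$ on all of $\S^n$ but vanishes on the equator, so pushing $\be$ just below $1$ is the natural way to buy positivity near $\bar\theta=\pm\pi/2$.

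The central computation is to expand $\mathrm{Hess}\,\La$ in the orthogonal coordinate frame via the Leibniz rule $\mathrm{Hess}(fg)=f\,\mathrm{Hess}\,g+g\,\mathrm{Hess}\,f+df\otimes dg+dg\otimes df$, using the Christoffel symbols of the warped metric, most importantly $\G^\psi_{\bar\theta\bar\theta}=-\sin\psi\cos\psi$ and $\G^{\bar\theta}_{\psi\bar\theta}=\cot\psi$. A direct calculation shows that the off-diagonal $(\partial_\psi,\partial_{\bar\theta})$ component cancels exactly: the contribution $\sin\psi\cdot\be\cot\psi\sin(\be\bar\theta)$ coming from $\sin\psi\cdot\mathrm{Hess}(\cos(\be\bar\theta))$ is killed by $\cos\psi\cdot(-\be\sin(\be\bar\theta))$ coming from $d(\sin\psi)\otimes d(\cos(\be\bar\theta))$. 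Hence $\mathrm{Hess}\,\La$ is diagonal in the frame $\partial_\psi,\partial_{\bar\theta},\partial_{\phi_i}$, with eigenvalues $-\La$ on both $\partial_\psi$ and each $\partial_{\phi_i}$, and $\cos(\be\bar\theta)(\cos^2\psi-\be^2)/\sin\psi$ on $\partial_{\bar\theta}$.

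To verify $\mathrm{Hess}\,\La\le-(1-\tau_n)\La\si_{\S}$, the eigenvalues on $\partial_\psi$ and $\partial_{\phi_i}$ give $-\La\le-(1-\tau_n)\La$, which is immediate. The eigenvalue on $\partial_{\bar\theta}$ reduces, after dividing by $\cos(\be\bar\theta)>0$ and multiplying by $\sin\psi>0$, to the scalar condition $\be^2\ge 1-\tau_n\sin^2\psi$; since $\sin^2\psi\ge\sin^2\tau$ on $\Om$, choosing $\be^2=1-\tau_n\sin^2\tau$ closes the argument, and this $\be$ lies strictly in $(0,1)$ for any $\tau_n\in(0,1)$ and $\tau>0$. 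The step I expect to be most delicate is recognizing the correct separable form together with the off-diagonal cancellation: the $\sin\psi$ factor in front is not arbitrary, and replacing it by a generic profile $F(\psi)$ would leave a mixed term that would have to be cancelled by extra structure. Once the ansatz is in place the rest is a bookkeeping exercise, with the excision $\psi\ge\tau$ providing exactly the slack needed to push $\be$ close enough to $1$.
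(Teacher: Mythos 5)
Your proof is correct, and it reaches the conclusion by a genuinely different construction than the paper's. The paper rotates coordinates so that $\nu_0=E_1$, hence $\Om=\{x_1\ge0\}\setminus B_\tau(\S^{n-2})$, and takes the \emph{additive} perturbation $\varphi_\ep=x_1+\ep\,\e(x_1)|x_2|$ of the first eigenfunction, where $\e$ is a cutoff supported near the equator $\{x_1=0\}$; there $|x_2|$ is bounded below (because $B_\tau(\S^{n-2})$ has been excised), so $\varphi_\ep$ is smooth and positive, the identity $\mathrm{Hess}\,x_i=-x_i\si_\S$ gives exact equality $\mathrm{Hess}\,\varphi_\ep=-\varphi_\ep\si_\S$ off the transition annulus, and the cutoff error of size $c'\ep/\tau^2$ is absorbed by $x_1\ge\tau/4$ there. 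You instead perturb \emph{multiplicatively in the angle}: in the join coordinates $\si_\S=d\psi^2+\sin^2\psi\,d\bar\theta^2+\cos^2\psi\,g_{\S^{n-2}}$ your ansatz $\La=\sin\psi\cos(\be\bar\theta)$ has an exactly diagonal Hessian (I checked the cross term: $\mathrm{Hess}\,\La_{\psi\bar\theta}=-\be\cos\psi\sin(\be\bar\theta)-\cot\psi\cdot\sin\psi\cdot(-\be\sin(\be\bar\theta))=0$), with relative eigenvalues $-\La$ in the $\psi$ and $\S^{n-2}$ directions and $\cos(\be\bar\theta)(\cos^2\psi-\be^2)/\sin\psi$ in the $\bar\theta$ direction, so the required inequality reduces to $\be^2\ge1-\tau_n\sin^2\psi$, and $\be^2=1-\tau_n\sin^2\tau$ works since $\psi\ge\tau$ on $\Om$. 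Both arguments use the excision of $B_\tau(\S^{n-2})$ in an essential way (the paper to make $|x_2|$ smooth and bounded below, you to bound $\sin\psi$ from below and keep $\be$ below $1$). What your version buys is a globally smooth closed-form function with explicit constants and no cutoff bookkeeping; what the paper's buys is that it only invokes the elementary identity $\mathrm{Hess}\,x_i=-x_i\si_\S$ in ambient coordinates, with no curvilinear Christoffel computation. One small point worth recording if you adopt your construction: the coordinates degenerate at $\psi=\pi/2$ and at $\bar\theta=\pm\pi/2$ one is on $\partial\Om$, but $\La=r\cos(\be\bar\theta)$ depends only on $(x_1,x_2)$ and is smooth on a neighborhood of $\Om$, and the tensor inequality is a closed condition, so it extends by continuity; also note that the later application to the enlarged region $\Om_{\Psi(\de)}$ in Section 4 still goes through for your $\La$ because $\be<1$ leaves room for $|\bar\theta|$ slightly exceeding $\pi/2$.
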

\begin{proof}
By the choice of coordinates of $\S^n$ and the definition of $\nu_0$, (in this proof) we can allow
$$\Om= \{(x_1,\cdots,x_{n+1})\in\S^{n}|\ x_{1}\ge0\}\setminus B_\tau(\S^{n-2}).$$
Now we define a smooth cut-off function $\e$ on $(-\infty,1]$ by $\e\equiv1$ on $(-\infty,\tau/4]$, $\e\equiv0$ on $[\tau/2,1]$, and $0\le\e\le1$, $|\e'|\le c/\tau$, $|\e''|\le c/\tau^2$.
Here, $c$ is an absolute positive constant. For each $\ep>0$, we define a positive function $\varphi_\ep$ on $\Om$ by
$$\varphi_\ep=x_{1}+\ep\e(x_{1})|x_2|.$$
Since $\sin\tau\ge\f2{\pi}\tau$ on $[0,\f{\pi}2]$, then obviously $\varphi_\ep$ is well-defined and smooth on $\Om$. Put $\Om^+_t=\Om\cap\{x_2>0\}\cap\{0\le x_{1}<t\}$ and $\Om^-_t=\Om\cap\{x_2<0\}\cap\{0\le x_{1}<t\}$ for each $0<t<\sin\tau$.
From \cite{JXY}, $\mathrm{Hess}\ x_i=-x_i\si_\S$ on $\S^n$ for each $1\le i\le n+1$. Hence there is an absolute positive constant $c'>0$ such that
\begin{equation}\aligned\label{Hessex1x2}
\left|\mathrm{Hess}(\e(x_{1}) x_2)\right|\le\f{c'}{\tau^2}.
\endaligned
\end{equation}
On $\Om^+_{\tau/4}$ one has
\begin{equation}\aligned
\mathrm{Hess}\ \varphi_\ep=\mathrm{Hess}\ x_{1}+\ep\mathrm{Hess}\ x_2=-x_{1}\si_\S-\ep x_{2}\si_\S=-\varphi_\ep\si_\S,
\endaligned
\end{equation}
and similarly on $\Om^-_{\tau/4}$ one has
\begin{equation}\aligned
\mathrm{Hess}\ \varphi_\ep=\mathrm{Hess}\ x_{1}-\ep\mathrm{Hess}\ x_2=-x_{1}\si_\S+\ep x_{2}\si_\S=-\varphi_\ep\si_\S.
\endaligned
\end{equation}
Note that $0<\tau<1/2$ is fixed. For each $0<\tau_n<1$, we choose $\ep$ sufficiently small such that $\f{c'\ep}{\tau^2}<\f{\tau}4\tau_n-\ep$.
On $\Om^+_{\tau/2}\setminus\Om^+_{\tau/4}$, combining \eqref{Hessex1x2} we have
\begin{equation}\aligned
\mathrm{Hess}\ \varphi_\ep=&\mathrm{Hess}\ x_{1}+\ep\mathrm{Hess}(\e x_2)\le-x_{1}\si_\S+\f{c'\ep}{\tau^2}\si_\S\\
\le&\left(-x_{1}+\f{\tau}4\tau_n-\ep\right)\si_\S\le\left(-x_{1}+\tau_nx_{1}-\ep\right)\si_\S\le-(1-\tau_n)\varphi_\ep\si_\S,
\endaligned
\end{equation}
and similarly on $\Om^-_{\tau/2}\setminus\Om^-_{\tau/4}$ one has
\begin{equation}\aligned
\mathrm{Hess}\ \varphi_\ep=\mathrm{Hess}\ x_{1}-\ep\mathrm{Hess}(\e x_2)\le-x_{1}\si_\S+\f{c'\ep}{\tau^2}\si_\S\le-(1-\tau_n)\varphi_\ep\si_\S.
\endaligned
\end{equation}
This is sufficient to complete the proof.
\end{proof}
Note that $\La$ in Lemma \ref{HessLa***} is strictly bounded away from zero as $\Om$ is closed.
Put $\Phi=\f1{\La\circ\g}$, then it is a smooth positive bounded function on $M$.
Let $\na$ be the Levi-Civita connection of $M$ as before.
We choose a local orthonormal tangent frame $\{e_i\}$ on $M$ such that $\na e_i=0$ at the considered point.
As $\g$ is a harmonic map from $M$ to $\S^n$, combining \eqref{|A|2***} and Lemma \ref{HessLa***} we conclude
\begin{equation}\aligned\label{DePhi*}
\De_M\Phi^{-1}=\sum_{i=1}^n\mathrm{Hess}\ \La(\g_*e_i,\g_*e_i)\le -(1-\tau_n)\Phi^{-1}\sum_{i=1}^n\lan\g_*e_i,\g_*e_i\ran=-(1-\tau_n)\Phi^{-1}|A|^2.
\endaligned
\end{equation}

\begin{theorem}\label{BerOm}
Let $\Om$ be the set defined in \eqref{Om}.
Let $M$ be a smooth oriented complete minimal hypersurface in $\R^{n+1}$ with Euclidean volume growth and $\g(M)\subset \Om$. Then $M$ is an affine hyperplane in $\R^n$.
\end{theorem}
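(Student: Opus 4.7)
The plan is to convert the pointwise inequality \eqref{DePhi*} into an improved stability inequality on $M$, and then to close with a Simons-type curvature estimate forcing $|A|\equiv 0$.

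First I would establish stability. Since $\overline{\gamma(M)}\subset\Omega$ is compact and $\Lambda$ is smooth and strictly positive there, $\Phi^{-1}=\Lambda\circ\gamma$ is bounded above and below by positive constants on $M$. Multiplying \eqref{DePhi*} by $\phi^2\Phi$ for $\phi\in C_c^\infty(M)$, integrating by parts, and absorbing the cross term via Cauchy--Schwarz (just as in the argument of Lemma \ref{dekstable}) yields
\begin{equation*}
(1-\tau_n)\int_M|A|^2\phi^2\le\int_M|\nabla\phi|^2.
\end{equation*}
Since $\tau_n\in(0,1)$ is arbitrary, letting $\tau_n\to 0$ shows $M$ is a stable minimal hypersurface in $\R^{n+1}$. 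For $n\le 5$ the Schoen--Simon--Yau curvature estimate \cite{SSY} then forces $|A|\equiv 0$, and $M$ is an affine hyperplane.

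For $n\ge 6$ stability alone does not suffice, and I would use \eqref{DePhi*} more fully. Combining it with the Simons--Kato inequality $|A|\Delta_M|A|\ge (2/n)|\nabla|A||^2-|A|^4$ and testing the stability inequality with $|A|^{1+q}\Phi^a\eta$---where $\eta$ is a radial cutoff and $q>0$, $a\in\R$ are parameters to be chosen optimally---the extra bounded weight $\Phi^a$ together with the strict pointwise gain $(1-\tau_n)\Phi^{-1}|A|^2$ supplied by \eqref{DePhi*} provides the flexibility needed to absorb every mixed term and to produce a curvature decay of the form
\begin{equation*}
\int_{M\cap\mathbf{B}_R}|A|^{4+2q}\le\frac{C}{R^2}\int_{M\cap\mathbf{B}_{2R}}|A|^{2+2q}.
\end{equation*}
Iterating this with H\"older's inequality and the Euclidean volume growth $\mathcal{H}^n(M\cap\mathbf{B}_R)\le CR^n$ forces $\int_M|A|^{4+2q}=0$, so $|A|\equiv 0$ on $M$, and $M$ is an affine hyperplane.

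The main obstacle is the curvature estimate for $n\ge 6$, where the classical Schoen--Simon--Yau argument breaks down. The novelty lies in exploiting not merely the integrated stability implied by \eqref{DePhi*}, but the pointwise gain itself: inserting the bounded weight $\Phi$ into the Simons test function so that the Kato coefficient $2/n$, the $(1-\tau_n)$-stability gap, and the $\Phi$-derivative terms can be balanced in every dimension. The freedom to take $\tau_n\in(0,1)$ arbitrarily close to $0$, together with the uniform positive lower bound on $\Phi^{-1}$ coming from $\gamma(M)\subset\Omega$ staying away from $\overline{\S}^{n-1}_+$, are the extra ingredients that make this balancing possible.
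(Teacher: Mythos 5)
Your strategy coincides with the paper's: in the main case the proof is exactly the Ecker--Huisken weighted Simons argument you describe, exploiting the pointwise inequality \eqref{DePhi*} rather than mere stability, and your observation that the weight $\Phi$ is what removes the dimensional restriction is the right one. The paper makes the balancing explicit by computing $\Delta_M(|A|^p\Phi^q)$ and choosing $p=n$, $q=n+1$ (which gives $\Delta_M(|A|^n\Phi^{n+1})\ge(\tfrac12-\tfrac1{2n})|A|^{n+2}\Phi^{n+1}$) and $p=2n+2$, $q=2n+4$ (which makes $|A|^{2n+2}\Phi^{2n+4}$ subharmonic), so the absorption you leave implicit does close in every dimension. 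The one step in your sketch that needs repair is the final iteration: the inequality $\int_{\mathbf{B}_R\cap M}|A|^{4+2q}\le CR^{-2}\int_{\mathbf{B}_{2R}\cap M}|A|^{2+2q}$ cannot be bootstrapped via H\"older unless one already knows that $\int_{\mathbf{B}_R\cap M}|A|^{4+2q}$ is finite and grows at most polynomially, which is not a priori available. The paper sidesteps this by raising the cutoff to the power $2n+2$, so that a single application of Young's inequality yields $\int_M|A|^{2n+2}\Phi^{2n+2}\zeta^{2n+2}\le C\int_M\Phi^{2n+2}|\nabla\zeta|^{2n+2}\le CR^{-n-2}$ directly; since $2n+2>n$ for all $n$, this tends to $0$, and the mean value inequality \eqref{subf} applied to the subharmonic function $|A|^{2n+2}\Phi^{2n+4}$ converts the integral decay into the pointwise conclusion $|A|\equiv0$. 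Your separate treatment of $n\le5$ via \cite{SSY} is correct but unnecessary, since the weighted argument is uniform in $n$.
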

\begin{proof}
From \eqref{DePhi*}, there is a smooth positive bounded function $\Phi$ on $M$ satisfying
\begin{equation}\aligned
\De_M\Phi^{-1}\le -\left(1-\f1{2n}\right)\Phi^{-1}|A|^2\qquad\qquad \mathrm{on}\ M.
\endaligned
\end{equation}
Namely,
\begin{equation}\aligned\label{DeP}
\De_M\Phi\ge \left(1-\f1{2n}\right)\Phi|A|^2+2\Phi^{-1}|\na\Phi|^2\qquad\qquad \mathrm{on}\ M.
\endaligned
\end{equation}
Recall Simons' inequality \cite{Si} (see also the formula (4) in \cite{EH} for instance):
\begin{equation}\aligned\label{DeA}
\De_M|A|^2\ge -2|A|^4+2\left(1+\f2{n}\right)|\na|A||^2\qquad\qquad \mathrm{on}\ M.
\endaligned
\end{equation}
Now we can use the idea in \cite{EH} by Ecker-Huisken to show the flatness of $M$. For any positive constants $p,q>0$, from \eqref{DeP}\eqref{DeA} one has
\begin{equation}\aligned
\De_M\left(|A|^p\Phi^q\right)\ge& \left(\left(1-\f1{2n}\right)q-p\right)|A|^{p+2}\Phi^q+p\left(p-1+\f2n\right)|A|^{p-2}\Phi^q|\na|A||^2\\
&+q(q+1)|A|^p\Phi^{q-2}|\na\Phi|^2+2pq|A|^{p-1}\Phi^{q-1}\lan\na|A|,\na\Phi\ran.
\endaligned
\end{equation}
With Young's inequality we derive
\begin{equation}\aligned
\De_M\left(|A|^p\Phi^q\right)\ge \left(\left(1-\f1{2n}\right)q-p\right)|A|^{p+2}\Phi^q+p\left(\f{p}{q+1}-1+\f2n\right)|A|^{p-2}\Phi^q|\na|A||^2.
\endaligned
\end{equation}
For $p=n,q=n+1$, we have
\begin{equation}\aligned\label{APhi}
\De_M\left(|A|^n\Phi^{n+1}\right)\ge \left(\f12-\f1{2n}\right)|A|^{n+2}\Phi^{n+1}.
\endaligned
\end{equation}
For $p=2n+2,q=2n+4$, we have
\begin{equation}\aligned\label{APhi*}
\De_M\left(|A|^{2n+2}\Phi^{2n+4}\right)\ge 0.
\endaligned
\end{equation}

Let $\Phi_*$ be a positive constant $>1$ so that $\Phi_*^{-1}<\Phi<\Phi_*$ on $M$.
From \eqref{subf} and \eqref{APhi*}, for each $z\in M$ we have
\begin{equation}\aligned\label{AnPhin+1}
|A|^{2n+2}\Phi^{2n+4}(z)\le\f{c_M}{\r^n}\int_{M\cap\mathbf{B}_{\r}(z)} |A|^{2n+2}\Phi^{2n+4}\le\f{c_M\Phi_*^{2}}{\r^n}\int_{M\cap\mathbf{B}_{\r}(z)} |A|^{2n+2}\Phi^{2n+2}.
\endaligned
\end{equation}
Let $\z$ be a smooth positive function in $[0,\infty)$ by $\z(r)=1$ for $0\le r\le \r$,
$\z(r)=0$ for $r\ge2\r$,
and $|\z'|\le c_n\r^{-1}$ for $\r\le r\le 2\r$.
Here, $c_n$ is a positive constant depending only on $n$.
We multiply \eqref{APhi} on both sides by $|A|^n\Phi^{n+1}\z^{2n+2}(|X|)$, and integrate by parts in conjunction with Young's inequality, then
\begin{equation}\aligned
&\left(\f12-\f1{2n}\right)\int_{M} |A|^{2n+2}\Phi^{2n+2}\z^{2n+2}\le\int_M|A|^n\Phi^{n+1}\z^{2n+2}\De_M\left(|A|^n\Phi^{n+1}\right)\\
=&-\int_M\left|\na\left(|A|^n\Phi^{n+1}\right)\right|^2\z^{2n+2}-2(n+1)\int_M|A|^n\Phi^{n+1}\z^{2n+1}\left\lan\na\z,\na\left(|A|^n\Phi^{n+1}\right)\right\ran\\
\le&(n+1)^2\int_M|A|^{2n}\Phi^{2n+2}\z^{2n}|\na\z|^2\\
\le&(n+1)^2\left(\int_{M} |A|^{2n+2}\Phi^{2n+2}\z^{2n+2}\right)^{\f n{n+1}}\left(\int_{M}\Phi^{2n+2}|\na\z|^{2n+2}\right)^{\f 1{n+1}}.
\endaligned
\end{equation}
Note $n\ge2$. The above inequality implies
\begin{equation}\aligned\label{APhiz}
\int_{M} |A|^{2n+2}\Phi^{2n+2}\z^{2n+2}\le2^{2n+2}(n+1)^{2n+2}\int_{M}\Phi^{2n+2}|\na\z|^{2n+2}.
\endaligned
\end{equation}
Combining \eqref{AnPhin+1}\eqref{APhiz} and $\Phi_*^{-1}<\Phi<\Phi_*$, $|\z'|\le c_n\r^{-1}$, we have
\begin{equation}\aligned
&\Phi_*^{-2n-4}|A|^{2n+2}(z)\le\f{c_M\Phi_*^{2}}{\r^n}\int_{M} |A|^{2n+2}\Phi^{2n+2}\z^{2n+2}\\
\le& 2^{2n+2}(n+1)^{2n+2}c_M\Phi_*^{2n+4}\r^{-n}\int_{M} |\na\z|^{2n+2}\\
\le& 2^{2n+2}(n+1)^{2n+2}c_Mc_n^{2n+2}\Phi_*^{2n+4}\r^{-3n-2}\mathcal{H}^n(M\cap B_{2\r}(z)\setminus B_\r(z)).
\endaligned
\end{equation}
Letting $\r\rightarrow\infty$, we get $|A|=0$ at $z$, which completes the proof.
\end{proof}


In all, we have proven the following rigidity result in this chapter.
\begin{corollary}\label{Multione}
Let $M$ be a smooth oriented complete minimal hypersurface in $\R^{n+1}$ with Euclidean volume growth and the Gauss image $\g(M)\subset K$, where $K$ is a compact set in $\S^n\setminus\overline{\S}^{n-1}_+$. If the support of one of tangent cones of $M$ at infinity is the Euclidean space, then $M$ is an affine linear space.
\end{corollary}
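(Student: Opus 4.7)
The plan is to chain together the three lemmas and the theorem of this section. Under the hypothesis that some tangent cone of $M$ at infinity has support $\R^n$, Lemma \ref{Asympv*} gives
\[
\lim_{i\to\infty} R_i^{-n}\int_{M\cap \mathbf{B}_{R_i}} |v-v_*| = 0,
\]
where $v = \Theta\circ\g$ for the bounded strictly convex function $\Theta$ from Lemma \ref{subhar}. Since the Ruh--Vilms harmonicity of $\g$ and \eqref{subv} make $v$ a bounded subharmonic function on $M$, I would then invoke Lemma \ref{dekstable} (the second statement bearing that label) to upgrade the integral decay into the pointwise identity $\sup_M v = v_*$.

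Second, I would exploit the explicit shape \eqref{phik} of $\phi$. Because $v_* = \Theta(\nu_*) = \Theta(-\nu_*)$ realizes the supremum of $v$ on $M$, letting the parameter $k\ge 2$ grow forces the angular coordinate $\tilde\theta = \theta-\theta_*-\frac{\pi}{2}$ to vanish on $\g(M)$ up to an arbitrarily small amount. Hence $\g(M)\subset \widetilde{E}_t$ for every small $t>0$, and therefore $\g(M)$ lies in the closed hemisphere $\{\xi\in\S^n : \langle\xi,\nu_0\rangle\ge 0\}$, where $\nu_0$ is the unit vector with $\mathbf{P}(\nu_0) = (\sin(\theta(\nu_*)+\pi/2),\cos(\theta(\nu_*)+\pi/2))$.

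Third, I would apply the classical Jacobi-type identity $\Delta_M\langle\nu,\nu_0\rangle = -|A|^2\langle\nu,\nu_0\rangle$ together with the strong maximum principle to obtain a dichotomy on the connected manifold $M$: either $\langle\nu,\nu_0\rangle\equiv 0$, in which case $M$ is the affine hyperplane orthogonal to $\nu_0$ and we are done; or $\langle\nu,\nu_0\rangle>0$ everywhere. In the second case $\g(M)\subset K\cap\{\langle\xi,\nu_0\rangle>0\}$, which together with $K\cap B_\tau(\S^{n-2})=\emptyset$ puts $\g(M)$ inside the set $\Omega$ of \eqref{Om}, and Theorem \ref{BerOm} finishes the job.

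The genuinely difficult work has already been done upstream: Lemma \ref{HessLa***} produced the smooth bounded $\Phi$ giving \eqref{DeP}, and Theorem \ref{BerOm} combined it with Simons' inequality \eqref{DeA} and an Ecker--Huisken style Moser iteration to avoid the dimension restriction $n\le 5$ of Schoen--Simon--Yau. So the only subtle point in the present corollary is to verify that the hypothesis $\g(M)\subset\Omega$ of Theorem \ref{BerOm} is met; the open-hemisphere reduction via the strong maximum principle is the linchpin here, and it depends crucially on having already established $\sup_M v = v_*$ in the previous step.
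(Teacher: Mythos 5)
Your proposal is correct and follows essentially the same route as the paper: the corollary is precisely the assembly of Lemma \ref{Asympv*}, the second Lemma \ref{dekstable} giving $\sup_M v=v_*$, the reduction $\g(M)\subset\widetilde{E}_t$ forcing the Gauss image into a closed hemisphere, the maximum-principle dichotomy from $\De_M\lan\nu,\nu_0\ran=-|A|^2\lan\nu,\nu_0\ran$, and finally Theorem \ref{BerOm} applied to $\g(M)\subset\Om$. You have also correctly identified the only delicate step, namely verifying the hypothesis of Theorem \ref{BerOm} via the open-hemisphere reduction.
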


\section{Regularity of minimal hypersurfaces}

Let $M$ be a smooth oriented minimal hypersurface in $\mathbf{B}_{2\r}(0)\subset\R^{n+1}$ with $\p M\subset\p\mathbf{B}_{2\r}(0)$ and the Gauss image $\g(M)\subset K$, where $K$ is a compact set in $\S^n\setminus\overline{\S}^{n-1}_+$.
Moreover, we assume
\begin{equation}\aligned\label{ASSArea}
\mathcal{H}^n(M\cap \mathbf{B}_{2\r}(0))<\a\r^n
\endaligned
\end{equation}
for some constant $\a>0$.
For each $X_0\in M\cap\mathbf{B}_{\r}(0)$ and each $0<\r_1\le\r$, by the monotonicity formula, we have
\begin{equation}\aligned
\r_1^{-n}\mathcal{H}^n(M\cap \mathbf{B}_{\r_1}(X_0))\le\r^{-n}\mathcal{H}^n(M\cap \mathbf{B}_{\r}(X_0))<\r^{-n}\mathcal{H}^n(M\cap \mathbf{B}_{2\r}(0))\le\a.
\endaligned
\end{equation}
For simplicity, we denote $\mathbf{B}_r=\mathbf{B}_r(0)$ for all $r>0$.
Now let us use the technique in the last chapter to show the following regularity theorem.
\begin{theorem}\label{M1Reg}
Let $K,\a$ be as above.
There is a positive constant $\de_{K,\a}$ depending only on $n,K,\a$ such that if
\begin{equation}\aligned\label{Ass1}
\inf_{\xi\in\S^n}\sup_{X\in M\cap\mathbf{B}_{2\r}}|\lan X,\xi\ran|<\de_{K,\a}\r,
\endaligned
\end{equation}
then $|A|\le 1/\r$ on $M\cap\mathbf{B}_{\r/2}$. Here, $A$ is the second fundamental form of $M$.
\end{theorem}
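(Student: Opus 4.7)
The plan is to argue by contradiction and reduce to the rigidity statement in Corollary \ref{Multione} via a blow-up. Rescale so $\r = 1$, and suppose no $\de_{K,\a}$ works: pick $\de_i \to 0$ and smooth oriented minimal hypersurfaces $M_i \subset \mathbf{B}_3$ satisfying all of the hypotheses with $\de = \de_i$, together with points $q_i \in M_i \cap \mathbf{B}_{1/2}$ at which $|A|(q_i) > 1$. A standard Schoen-style point-picking applied to $F_i(X) = |A|(X)(1-|X|)$ on $M_i \cap \mathbf{B}_1$ yields a maximizer $p_i$ with $\be_i := F_i(p_i) \ge 1/2$.

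If $\be_i$ remains bounded, then $|A|$ is uniformly bounded on $M_i \cap \mathbf{B}_{3/4}$, so a subsequence converges smoothly to a minimal $M_\infty$ on $\overline{\mathbf{B}_{1/2}}$ with $|A| \ge 1$ at some limit point; but the height hypothesis forces $M_\infty \subset \{x_{n+1}=0\}$ and hence $|A| \equiv 0$, a contradiction. In the main case $\be_i \to \infty$, set $C_i = |A|(p_i)$, $\r_i = (1-|p_i|)/2$ and rescale $\widetilde M_i := C_i(M_i - p_i)$. By construction $|A|(0) = 1$ on $\widetilde M_i$ and $|A| \le 2$ on $\mathbf{B}_{C_i \r_i}(0) \cap \widetilde M_i$ with $C_i \r_i \to \infty$. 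Smooth compactness for minimal hypersurfaces with bounded second fundamental form and Euclidean volume growth extracts a subsequential limit $\widetilde M_\infty$: a complete smooth oriented minimal hypersurface in $\R^{n+1}$ with $|A|(0) = 1$, $|A| \le 2$ globally, $\g(\widetilde M_\infty) \subset K$, and Euclidean volume growth inherited from $\mathcal{H}^n(M_i \cap \mathbf{B}_3) \le \a$.

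The crux is to show that the support of some tangent cone of $\widetilde M_\infty$ at infinity is a full hyperplane, after which Corollary \ref{Multione} forces $\widetilde M_\infty$ to be affine, contradicting $|A|(0) = 1$. I propose a diagonal argument: choose $s_k \to \infty$ and indices $i_k \to \infty$ satisfying
$$C_{i_k}\de_{i_k} \ll s_k \ll C_{i_k},$$
which is possible precisely because $\de_i \to 0$. Writing $s_k^{-1}\widetilde M_{i_k} = (s_k/C_{i_k})^{-1}(M_{i_k} - p_{i_k})$ and noting that the corresponding piece of $M_{i_k}$ still lies in the cylinder $\mathbf{C}_2$, the vertical coordinate of $s_k^{-1}\widetilde M_{i_k} \cap \mathbf{B}_R$ is bounded by $2C_{i_k}\de_{i_k}/s_k \to 0$ for every fixed $R$. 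Selecting $i_k$ large enough at each step that $\widetilde M_{i_k}$ approximates $\widetilde M_\infty$ smoothly on $\mathbf{B}_{s_k R}$, one obtains $s_k^{-1}\widetilde M_\infty \to \mathcal{C}$ as varifolds with $\mathrm{spt}(\mathcal{C}) \subset \{x_{n+1}=0\}$. Since $\mathcal{C}$ is a stationary integral cone whose support lies in a hyperplane, $\mathrm{spt}(\mathcal{C}) = \R^n$, and Corollary \ref{Multione} concludes the proof.

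The main obstacle will be executing this diagonal step cleanly: the interleaved choice of $s_k$ and $i_k$ must simultaneously guarantee varifold-closeness of $s_k^{-1}\widetilde M_{i_k}$ to $s_k^{-1}\widetilde M_\infty$ on balls of radius $R$ (forcing $i_k$ extremely large for each fixed $s_k R$) and preserve the vanishing $C_{i_k}\de_{i_k}/s_k \to 0$. The hypothesis $\de_i \to 0$ is exactly what permits these two requirements to be balanced, and it is the channel through which the height assumption in the theorem enters the argument.
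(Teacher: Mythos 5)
Your reduction to Corollary \ref{Multione} has a genuine gap at its crux, namely the claim that some tangent cone at infinity of the blow-up limit $\widetilde M_\infty$ has support equal to a hyperplane. The only quantitative input is that $\widetilde M_{i}\cap\mathbf{B}_{s}$ lies in the slab $\{|x_{n+1}|\le 2C_i\de_i\}$ for $s\le C_i$. To transfer any height information to $\widetilde M_\infty$ you must let $i\to\infty$ with the scale $s$ \emph{fixed}, which only yields $\widetilde M_\infty\cap\mathbf{B}_s\subset\{|x_{n+1}|\le\liminf_i 2C_i\de_i\}$. Since $C_i=|A|(p_i)$ is exactly the quantity the theorem is trying to control, there is no a priori bound on the product $C_i\de_i$; if $C_i\de_i\to\infty$ (which cannot be excluded), this containment is vacuous at every scale, and the window $C_{i_k}\de_{i_k}\ll s_k\ll C_{i_k}$ in your diagonal scheme escapes to infinity faster than any $s_k$ for which you can guarantee varifold closeness of $\widetilde M_{i_k}$ to $\widetilde M_\infty$ on $\mathbf{B}_{s_kR}$ (that closeness forces $i_k$ above a threshold depending on $s_kR$, which in turn inflates $C_{i_k}\de_{i_k}$). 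The hypothesis $\de_i\to0$ makes each window nonempty but does not resolve this order-of-limits conflict, so the planar tangent cone at infinity is not established and Corollary \ref{Multione} cannot be invoked. (If instead $\liminf_i C_i\de_i<\infty$ along the subsequence, your argument does close, since then $\widetilde M_\infty$ lies in a fixed slab; the problem is precisely the complementary case.)

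It is worth noting that your scheme also bypasses the actual difficulty the paper is addressing: as $\de\to0$, $M$ converges in $\mathbf{B}_{2\r}$ to a plane with some integer multiplicity $m$; for $m=1$ Allard's theorem already gives the curvature bound, and the whole point is $m\ge2$, where sheets with opposite orientations can appear. The paper handles this by a quantitative Gauss-map argument: the Schoen--Simon unoriented excess bound controls $\int\min\{|\nu-E_{n+1}|,|\nu+E_{n+1}|\}$, the convex function $\Th$ of Lemma \ref{subhar} with $\Th(E_{n+1})=\Th(-E_{n+1})$ converts this into $\int|v-v^*|<\Psi(\de)\r^n$, a mean-value argument upgrades this to $\sup_{\mathbf{B}_\r\cap M}v\le v^*+\Psi(\de)$, which traps $\g(M\cap\mathbf{B}_\r)$ near a closed hemisphere, and then the concave function $\La_\de$ and the Ecker--Huisken iteration of Theorem \ref{BerOm} produce the curvature estimate $|A|\le C_{K,\a}/\r$, improved to $1/\r$ by graphical decomposition and Schauder estimates. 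None of these mechanisms appears in your proposal, and without them (or the full Schoen--Simon regularity theory) the multiplicity-$m\ge2$ scenario is not excluded.
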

\begin{proof}
Without loss of generality, we can assume that $M$ is a connected smooth manifold.
There is a small constant $\tau_K>0$ depending only on $K$ such that $B_{\tau_K}(K)$, the $\tau_K$-neighborhood of $K$ in $\S^n$, is still contained in $\S^n\setminus\overline{\S}^{n-1}_+$.
Let $\widehat{K}$ denote $B_{\tau_K/2}(K)$, the $\tau_K/2$-neighborhood of $K$ in $\S^n$.
Let $\Psi(t|E,\a)$ denote a general positive function on $(0,1)\times \left(\S^n\setminus\overline{\S}^{n-1}_+\right)\times(0,\infty)$ with $\lim_{t\rightarrow0}\Psi(t|E,\a)=0$ for each $E\in \S^n\setminus\overline{\S}^{n-1}_+$ and $\a>0$.
For simplicity, we fix $(\widehat{K},\a)\in \left(\S^n\setminus\overline{\S}^{n-1}_+\right)\times(0,\infty)$, and assume $\Psi(t)=\Psi(t|\widehat{K},\a)$. We allow that $\Psi(t)$ changes from the line to the line.

We assume that there are a unit vector $\xi\in\S^n$ and a constant $\de>0$ such that $|\lan X,\xi\ran|\le\de\r$ for each $X\in M\cap\mathbf{B}_{2\r}$.
From \cite{SS} (or 22.2 in \cite{S}), the \emph{unoriented excess} satisfies
\begin{equation}\aligned
\int_{\mathbf{B}_{\f32\r}\cap M}\left(1-\lan\nu,\xi\ran^2\right)<\Psi(\de)\r^n,
\endaligned
\end{equation}
where $\nu$ denotes the unit normal vector field of $M$.
Using Cauchy inequality, one has (compared with \eqref{asympnu*})
\begin{equation}\aligned\label{nuEde}
\int_{\mathbf{B}_{\f32\r}\cap M}\min\{|\nu-\xi|,|\nu+\xi|\}<\Psi(\de)\r^n.
\endaligned
\end{equation}
There is a constant $\tau>0$ depending only on $K$ such that $B_{\tau}(\S^{n-2})\cap\widehat{K}=\emptyset$, where $B_{\tau}(\S^{n-2})$ denotes the $\tau$-neighborhood of $\{(0,0,x_3,\cdots,x_{n+1})|\ x_3^2+\cdots+x_{n+1}^2=1\}$ in $\S^n$, i.e., $B_\tau(\S^{n-2})=\{(x_1,\cdots,x_{n+1})\in\S^{n}|\ x_1^2+x_{2}^2<\sin^2\tau\}$.
From \eqref{nuEde}, at least one of $\xi$ and $-\xi$ is in $\widehat{K}\subset\S^n\setminus B_{\tau}(\S^{n-2})$ for the sufficiently small $\de>0$. Then both of $\xi$ and $-\xi$ are in $\S^n\setminus B_{\tau}(\S^{n-2})$.
From Lemma \ref{subhar}, there is a bounded function $\Th$ on $\S^n\setminus B_{\tau}(\S^{n-2})$ with $\Th(\xi)=\Th(-\xi)$
such that
$\Th$ is smooth strictly convex on $\widehat{K}\supset\g(M)$.
Put $v=\Th\circ\g$, then $v$ is a smooth subharmonic function on $M$ with $0<v\le v_{sup}$ for a constant $v_{sup}$ depending only on $n,K$.
Denote $v^*=\Th(\xi)$.
Combining \eqref{vnu} and \eqref{nuEde}, by following the argument of Lemma \ref{Asympv*}, for the sufficiently small $\de>0$ one has
\begin{equation}\aligned\label{vv*4.6}
\int_{\mathbf{B}_{\f32\r}\cap M}|v-v^*|<\Psi(\de)\r^n.
\endaligned
\end{equation}

Now we claim
\begin{equation}\aligned\label{claim}
\sup_{\mathbf{B}_{\r}\cap M}v\le v^*+\Psi(\de).
\endaligned
\end{equation}
Or else, there is a large positive integer $m$ independent of $\de$ such that $\sup_{\mathbf{B}_{\r}\cap M}v> v^*+m^{-\f12}$.
Note that $v_{sup}$ does not depend on $M$. By the monotonicity of $\sup_{\mathbf{B}_{\r}\cap M}v$ on $\r$, there is a constant $\r\le\r_0<\f{5}{4}\r$ such that
\begin{equation}\aligned\label{Br0r4m}
\sup_{\mathbf{B}_{\r_0+\f{\r}{4m}}\cap M}v\le \sup_{\mathbf{B}_{\r_0}\cap M}v+\f{v_{sup}}m.
\endaligned
\end{equation}
Or else, if for all $j\in\{0,\cdots,m-1\}$
\begin{equation}\aligned
\sup_{\mathbf{B}_{\r+\f{j\r}{4m}+\f{\r}{4m}}\cap M}v> \sup_{\mathbf{B}_{\r+\f{j\r}{4m}}\cap M}v+\f{v_{sup}}m,
\endaligned
\end{equation}
then
\begin{equation}\aligned
\sup_{\mathbf{B}_{\f{5\r}{4}}\cap M}v-\sup_{\mathbf{B}_{\r}\cap M}v=\sum_{j=0}^{m-1}\left(\sup_{\mathbf{B}_{\r+\f{j\r}{4m}+\f{\r}{4m}}\cap M}v-\sup_{\mathbf{B}_{\r+\f{j\r}{4m}}\cap M}v\right)>\sum_{j=0}^{m-1}\f{v_{sup}}m=v_{sup}.
\endaligned
\end{equation}
However, it's a contradiction by the definition of $v_{sup}$, and we obtain \eqref{Br0r4m}.
Let $q$ be a point in $\overline{\mathbf{B}_{\r_0}}\cap M$ with $v(q)=\sup_{\mathbf{B}_{\r_0}\cap M}v$. Then $v(q)+\f{v_{sup}}m\ge v$ on $\mathbf{B}_{\f{\r}{4m}}(q)\cap M$ and $v(q)=\sup_{\mathbf{B}_{\r_0}\cap M}v>v^*+m^{-\f12}$.
Hence, $\left(\f{2v_{sup}}m+v(q)-v\right)^{-1}$ is a bounded nonnegative subharmonic function on $\mathbf{B}_{\f{\r}{4m}}(q)\cap M$.
From the mean value inequality \eqref{subf}, one has
\begin{equation}\aligned\label{vvvvv}
\f m{2v_{sup}}\le\f{c_\a m^n}{\r^n}\int_{M\cap\mathbf{B}_{\f{\r}{4m}}(q)}\left(\f{2v_{sup}}m+v(q)-v\right)^{-1},
\endaligned
\end{equation}
where $c_\a$ is a constant depending only on $n,\a$.
Analog to \eqref{vepvep}, combining the choice of $q$ and \eqref{vv*4.6} one has
\begin{equation}\aligned\label{vvvvv*}
\f m{2v_{sup}}\le&\f{c_\a m^n}{\r^n}\int_{M\cap\mathbf{B}_{\f{\r}{4m}}(q)}\left(\f{m^2}{v^2_{sup}}\left|v-v^*\right|+\left(\f{2v_{sup}}m+v(q)-v^*\right)^{-1}\right)\\
\le&\f{c_\a m^{n+2}}{v^2_{sup}}\Psi(\de)+\f{c_\a m^n}{\r^n}\int_{M\cap\mathbf{B}_{\f{\r}{4m}}(q)}m^{\f12}.
\endaligned
\end{equation}
However, \eqref{vvvvv*} fails for the sufficiently large integer $m$ and sufficiently small $m^2\Psi(\de)$. Hence we complete the proof of \eqref{claim}.

By the definition of $\Th$ with the sufficiently large $k>0$, $\g\left(\mathbf{B}_{\r}\cap M\right)\subset\widetilde{E}_{\Psi(\de)}$, where one can find the definition of $\widetilde{E}_t$ in \eqref{EtEt}.
Namely, there is a unit constant vector $(\e_1,\e_2)\in\R^2$ such that $\g\left(\mathbf{B}_{\r}\cap M\right)$ is contained in the closed set $\Om_{\Psi(\de)}$ defined by
$$ \{(x_1,\cdots,x_{n+1})\in\S^{n}|\ x_{1}\e_1+x_2\e_2\ge-\Psi(\de)\}\setminus B_{\tau}(\S^{n-2}).$$
Let $\Om_0$ denote the closed set $\lim_{\de\rightarrow0^+}\Om_{\de}$.
From the proof of Lemma \ref{HessLa***}, there is a positive smooth function $\La$ on a neighborhood of $\Om_{0}$ such that
$\mathrm{Hess}\, \La\le-\left(1-\f1{3n}\right)\La\si_{\S}$ on $\Om_{0}$.
Hence, for the sufficiently small $\de>0$, $\La$ is positive on $\Om_{\Psi(\de)}$ satisfying
\begin{equation}\aligned
\mathrm{Hess}\, \La\le-\left(1-\f1{2n}\right)\La\si_{\S}\qquad \qquad \mathrm{on}\ \Om_{\Psi(\de)},
\endaligned
\end{equation}
and $\La,\f1{\La}$ are bounded on $\Om_{\Psi(\de)}$ by a constant depending on $n,K$, but independent of $\de$.

By following the proof of Theorem \ref{BerOm}, there is a positive constant $C_{K,\a}$ depending only on $n,K,\a$
such that $|A|\le C_{K,\a}/\r$ on $M\cap\mathbf{B}_{3\r/4}$.
Combining \eqref{nuEde}, we get
$$\sup_{M\cap\mathbf{B}_{2\r/3}}\min\{|\nu-\xi|,|\nu+\xi|\}<\Psi(\de).$$
Note that $M$ is a connected smooth manifold.
Hence, there is a smooth function $w$ on a subset of $n$-Euclidean ball $B_{2\r/3}(0)$ in the hyperplane perpendicular to $\xi$ so that $M\cap\mathbf{B}_{2\r/3}(0)$ can be written as a graph of $w$ with $\sup_{B_{3\r/5}(0)}|Dw|<\Psi(\de)$.
Then combining the Schauder estimates of elliptic equations,
we complete the proof.
\end{proof}
\textbf{Remark.} It is interesting to compare Theorem \ref{M1Reg} with the regularity theorem for stable minimal hypersurfaces by Schoen-Simon (Theorem 1 in \cite{SS}.)

\section{Benstein theorem for minimal hypersurfaces}

\begin{lemma}\label{radial line}
Let $M_i$ be a sequence of $n$-dimensional smooth complete oriented embedded minimal hypersurfaces in $\R^{n+1}$ with uniform Euclidean volume growth.
If the Gauss image $\g(M_i)\subset K$, $K$ is a compact set in $\S^n\setminus\overline{\S}^{n-1}_+$, and $M_i$ converges to a nontrivial minimal variety $T\times\R^{n-1}$ in the varifold sense, then $\mathrm{spt}T$ is a line in $\R^2$.
\end{lemma}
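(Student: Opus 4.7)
The plan is to analyze $T$ as a stationary integral 1-varifold in $\R^2$ and to use the multiplicity-one regularity Theorem \ref{M1Reg} to force $\mathrm{spt}T$ to be a single line, arguing by contradiction via a blow-up at potential junctions and a blow-down to rule out disjoint parallel components.

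As a first step, I would note that $T$ must be stationary: $T\times\R^{n-1}$ is a varifold limit of embedded minimal hypersurfaces, hence a stationary integral $n$-varifold, and the product structure combined with $\R^{n-1}$-translation invariance forces $T$ to be a stationary integral 1-varifold in $\R^2$. By the classical structure theorem, $\mathrm{spt}T$ is then locally a finite union of line segments meeting at junction vertices satisfying the balance condition $\sum_j m_j \vec u_j=0$ with integer multiplicities $m_j$ and unit tangent vectors $\vec u_j$. At each regular point $p$ of $\mathrm{spt}T$ with tangent line $L_p$ and unit normal $\nu_p\in\S^1\subset\R^2\times\{0\}$, the varifold convergence $M_i\to T\times\R^{n-1}$ together with the uniform Euclidean volume growth ensures that the height of $M_i$ inside a small cylinder around $(p,0)$ becomes arbitrarily small for large $i$. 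Applying Theorem \ref{M1Reg} (after a rotation placing $L_p\times\R^{n-1}$ horizontally) yields uniform local curvature bounds on $M_i$ and smooth convergence to $L_p\times\R^{n-1}$ with integer multiplicity, so $\pm\nu_p\in\overline{\bigcup_i\g(M_i)}\subset K$ for every line in $\mathrm{spt}T$.

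The main step is to exclude by contradiction all configurations of $\mathrm{spt}T$ other than a single line. If $p\in\mathrm{spt}T$ is a junction, then a blow-up by $\lambda_j\to\infty$ at $p$ combined with a diagonal subsequence in $i$ produces new embedded minimal hypersurfaces $\tilde M_j$ satisfying the same hypotheses and converging to $T_p\times\R^{n-1}$, where $T_p$ is a stationary 1-cone of $k\geq 3$ rays from the origin. If instead $\mathrm{spt}T$ has no junctions but contains two or more disjoint (hence parallel) lines, a blow-down at infinity reduces to the analogous multi-line cone setting. To complete the contradiction, I would choose the antipodal pair $\pm\nu_{j_0}$ used in Lemma \ref{subhar} so that the corresponding supporting function $\Th$ satisfies $\Th(\pm\nu_{j_0})>\Th(\pm\nu_{j'})$ for all other normals $\nu_{j'}$ of the rays of $T_p$. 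The bounded subharmonic function $v=\Th\circ\g$ on $\tilde M_j$ then attains its supremum (in the limit sense) at $\Th(\nu_{j_0})$, and the mean value inequality argument from the proof of Lemma \ref{dekstable} in Section 3 forces $\int_{\mathbf{B}_R\cap\tilde M_j}|v-\Th(\nu_{j_0})|=o(R^n)$ in the spirit of Lemma \ref{Asympv*}; however, sheets of $\tilde M_j$ approaching the other rays of $T_p$ contribute a positive amount of order $R^n$ to this integral, yielding the desired contradiction.

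The main obstacle is precisely this last step: the asymptotic vanishing $\int|v-v^*|=o(R^n)$ established in Lemma \ref{Asympv*} relies on the tangent cone at infinity being a single hyperplane (multiplicity one), while the present blow-up/blow-down limit $T_p\times\R^{n-1}$ is a multi-sheeted object whose Gauss image takes several distinct values on $K\cap\S^1$. Making this adaptation precise—showing that the $L^1$-oscillation of $v$ remains of order $R^n$ in the multi-sheeted limit and extracting from this a quantitative contradiction via the mean value inequality—will require a careful localization argument (working in annular regions that isolate individual sheets away from the singular axis $\{0\}\times\R^{n-1}$) and a robust use of the strict convexity of $\Th$ established in Lemma \ref{subhar}.
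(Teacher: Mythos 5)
Your proposal has a genuine gap, and it sits exactly where you flag it: the contradiction you want to extract from a blow-up limit $T_p\times\R^{n-1}$ with $k\ge 3$ rays is never actually produced. The $L^1$-vanishing $\int_{\mathbf{B}_R\cap \tilde M_j}|v-v^*|=o(R^n)$ of Lemma \ref{Asympv*} is derived from the unoriented excess tending to zero, which holds only when the limit varifold is supported in a single hyperplane; for a multi-rayed cone the excess is bounded below by a positive multiple of $R^n$, so the quantity you need to be $o(R^n)$ is genuinely of order $R^n$ and the supremum/mean-value argument of Section 3 does not even start. Your suggested repair---localizing to annular regions that isolate individual sheets---presupposes that away from the axis $\{0\}\times\R^{n-1}$ the surfaces $\tilde M_j$ decompose into graphical sheets over the rays at all relevant scales, which is precisely the sheeting statement whose consequences are at issue near the junction; moreover the mean value inequality \eqref{subf} is a statement about all of $M\cap\mathbf{B}_\rho(p)$ and cannot be restricted to one sheet without first establishing that decomposition. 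It is also not clear that the specific function of Lemma \ref{subhar}, built from one fixed antipodal pair, can be arranged to strictly dominate at a chosen normal over all the other normals of an arbitrary admissible ray configuration.

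The paper's own route is different and much shorter. It first integrates $\De_{M_i}(\Th\circ\g)\ge\k|A_i|^2$ against a cutoff, using only that $\Th$ is bounded, to obtain the uniform integral curvature estimate $\int_{M_i\cap\mathbf{B}_{\rho}(p)}|A_i|^2\le c_\a\rho^{n-2}$; with this estimate and the uniform volume bound in hand, it then follows the proof of Theorem 2 of \cite{SS} verbatim, with the stable-hypersurface sheeting theorem (Theorem 1 of \cite{SS}) replaced by Theorem \ref{M1Reg}. The exclusion of triple junctions and of several parallel lines is thus delegated entirely to the Schoen--Simon argument rather than to a new Gauss-map computation. If you wish to keep your structure, the piece you must supply is a substitute for that Schoen--Simon step; the convexity of $\Th$ alone, as exploited in Sections 2--3, does not provide it.
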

\begin{proof}
From the definition of $M_i$, there is a constant $\a>0$ such that
\begin{equation}\aligned\label{ai}
\r^{-n}\mathcal{H}^n\left(M_i\cap\mathbf{B}_{\r}(p)\right)<\a
\endaligned
\end{equation}
for each $i$, $\r>0$, and $p\in\R^{n+1}$.
From \eqref{subv}, integrating by parts infers that there is a constant $c_\a$ depending only on $n$ and $\a$
such that
\begin{equation}\aligned\label{Ai}
\int_{M_i\cap\mathbf{B}_{\r}(p)}|A_i|^2\le c_\a\r^{n-2}
\endaligned
\end{equation}
for all $\r>0$.
Here, $A_i$ is the second fundamental form of $M_i$.
Now we can complete the proof by following the steps in the proof of Theorem 2 of \cite{SS} in which Theorem 1 of \cite{SS} is replaced by Theorem \ref{M1Reg}.
\end{proof}

Recall that $\overline{\S}^{n-1}_+$ is the hemisphere of $\S^{n-1}$ defined by
$$\{(x_1,\cdots,x_{n+1})\in\S^{n}|\ x_1=0,\ x_2\ge0\}.$$
\begin{theorem}\label{main}
Let $M$ be an $n$-dimensional smooth oriented complete embedded minimal hypersurface in $\R^{n+1}$ with Euclidean volume growth. If the image under the Gauss map omits a neighborhood of $\overline{\S}^{n-1}_+$, then $M$ must be an affine hyperplane.
\end{theorem}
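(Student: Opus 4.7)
The plan is to reduce, via a Schoen--Simon-style dimension-reduction argument, to the situation handled by Corollary \ref{Multione}, and then invoke that corollary. The ingredients already available are the bounded strictly convex function on $K$ from Lemma \ref{subhar}, the $\de$-stability of $M$ from Lemma \ref{dekstable}, the one-dimensional rigidity statement Lemma \ref{radial line}, and the small-excess regularity Theorem \ref{M1Reg}.

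By Corollary \ref{Multione}, it suffices to exhibit one tangent cone $C$ of $M$ at infinity whose support is a linear hyperplane $\R^n\subset\R^{n+1}$. Fix $C$ as a subsequential varifold limit of $M_i:=R_i^{-1}M$ with $R_i\to\infty$; such $C$ exists by Euclidean volume growth and is an integer rectifiable stationary cone. Because $\g(M)\subset K$ with $K$ a compact subset of $\S^n\setminus\overline{\S}^{n-1}_+$, the regular part of $C$ also has Gauss image in $\overline K$.

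I then carry out an iterated dimension reduction inside $C$. If $\mathrm{spt}\,C$ is not a hyperplane, there is a non-regular, non-vertex point $y\in\mathrm{spt}\,C$, and because $C$ is translation invariant along the ray through $y$, any tangent cone of $C$ at $y$ splits off an $\R$-factor and has the form $C^{(1)}\times\R$. A standard diagonal argument realizes $C^{(1)}\times\R$ itself as a varifold limit of suitably rescaled and translated copies of $M$, under which Euclidean volume growth and the Gauss-image constraint $\g\subset K$ are preserved by the monotonicity formula and by naturality of the Gauss map. Iterating this splitting at most $n-1$ times produces a stationary minimal cone of product form $T\times\R^{n-1}$ with $T\subset\R^2$ a $1$-dimensional stationary cone, again realized as a varifold limit of rescalings of $M$. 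Lemma \ref{radial line} then forces $\mathrm{spt}\,T$ to be a single line through the origin, so this bottom cone has support equal to a flat $n$-plane.

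I finally propagate flatness back up the tower: because the bottom limit is a flat plane with multiplicity one, the approximating rescalings eventually satisfy the height hypothesis \eqref{Ass1} of Theorem \ref{M1Reg} on arbitrarily large balls, and that theorem upgrades the varifold convergence to smooth $C^1$-convergence with $|A|\to 0$. Applied at each level of the tower, this shows that every intermediate cone $C^{(k)}$ was already flat (a minimal cone which is $C^1$-close to a plane on arbitrarily large balls is itself a plane, by homogeneity), so in particular $C$ has support equal to a hyperplane $\R^n$. Corollary \ref{Multione} applied to this $C$ then yields that $M$ is an affine hyperplane. The principal obstacle is in Steps 3--4: (i) verifying that every iterated tangent cone is genuinely realized as a varifold limit of rescalings of the original $M$, so that Lemma \ref{radial line} and Theorem \ref{M1Reg} remain applicable at each level, and (ii) transferring the bottom-level flatness back up the tower. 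The proof follows the architecture of Theorem~2 of \cite{SS}, with Theorem \ref{M1Reg} playing the role of their regularity theorem and Lemma \ref{radial line} handling the base case.
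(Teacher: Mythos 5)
There is a genuine gap in your dimension-reduction step. You assert that iterating the blow-up ``at most $n-1$ times'' produces a stationary cone of the form $T\times\R^{n-1}$ with $T$ a one-dimensional cone in $\R^2$, but Federer-type dimension reduction does not go that far in general: the iteration terminates as soon as one reaches a cone $C_*\times\R^{n-k}$ (with $3\le k\le n$) in which $C_*$ has an \emph{isolated} genuine singularity at the origin, because then every tangent cone at a nonzero point of the product's singular set is (up to rotation) the product itself, and there is nothing left to blow up. The model to keep in mind is a Simons-type cone with an isolated singularity: no further blow-up reduces its cross-sectional dimension to one. Lemma \ref{radial line} only disposes of the branch of the reduction in which the cross-section genuinely becomes one-dimensional, i.e.\ the case where the tangent cone at infinity is a union of hyperplanes and the singularities are of intersection/branching type. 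Your proposal never addresses the terminal case of an irreducible singular cone, which is exactly where the difficulty lies; and the secondary step of ``propagating flatness back up the tower'' is both delicate (flatness of one iterated tangent cone does not by itself force flatness one level up without a multiplicity-one regularity statement applied at that level) and, in the paper's architecture, unnecessary.

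The paper closes the terminal case by an argument you do not have. After reducing to $C_*\times\R^{n-k}$ with $C_*$ having only the origin as a genuine (``real'') singular point, the link $\Si=\mathrm{spt}\,C_*\cap\partial B^{k+1}_1$ is a smooth \emph{compact} minimal hypersurface of the sphere. The Gauss image of the regular part of the limit still lies in $K$, so the function $v=\Th\circ\g$ built from Lemma \ref{subhar} restricts to $\Si$ and satisfies $\De_\Si v\ge\k|A_\Si|^2\ge0$. The maximum principle on the compact $\Si$ forces $v$ to be constant, hence $A_\Si\equiv0$, so $C_*$ is a hyperplane, contradicting the presence of the singularity. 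The whole proof is then run by contradiction: assuming $M$ is not affine, Corollary \ref{Multione} says the tangent cone at infinity is singular, and both branches of the reduction (union of hyperplanes, handled by Lemma \ref{radial line}; irreducible singularity, handled by the compact link and the maximum principle) are impossible. You need to supply the second branch, or an equivalent substitute, for your argument to be complete.
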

\begin{proof}
Let us prove it by contradiction. Assume that $M$ is not affine.
There is a sequence $r_{i}\rightarrow\infty$ such that $r_{i}^{-1}M$ converges to a minimal cone $C$ with integer multiplicity in the varifold sense.
From Corollary \ref{Multione}, the support of $C$ is not a hyperplane. Namely, the singular set of $C$ is not empty.


If there is a singular point $x\in\mathrm{spt}C\setminus\{0\}$, then we blow up the cone $C$ at $x$ and obtain $C'\times\R$, where $C'$ is a minimal cone with a singular point at the origin.
By dimension reduction argument, there is a sequence of smooth minimal hypersurfaces $M_i$, which is obtained from $M$ by scalings and translations, such that $M_i$ converges to a minimal cone $C_*\times\R^{n-k}$ in the varifold sense with $1\le k\le n$, such that $C_*$ is a $k$-minimal cone in $\R^{k+1}$ with the only one singular point at the origin.
From Lemma \ref{radial line}, $C_*$ has dimension $k\ge2$. Since $C_*$ is a $k$-minimal cone in $\R^{k+1}$ with the only one singular point, then $k$ can not be equal to 2. Namely, $k\ge3$.
Let $B^{k+1}_1$ be the unit ball in $\R^{k+1}$ centered at the origin $0^{k+1}$. Here $0^j$ denotes the origin of $\R^{j}$ for each $j\ge1$. Let $\Si=\mathrm{spt}C_*\cap\p B^{k+1}_1$, then $\Si$ is a smooth complete embedded hypersurface in $\p B^{k+1}_1$.

From the assumption of $M$, there is a compact set $K$ contained in the open set $\S^n\setminus \overline{\S}^{n-1}_+$ such that the Gauss image $\g(M)\subset K$.
By the definition of $M_i$, we have $\g(M_i)\subset K$. So the Gauss image of the regular set of the limit $C_*\times\R^{n-k}$ is contained in $K$, namely, $\g((C_*\setminus\{0^{k+1}\})\times\R^{n-k})\subset K$.
Let $\Th$ be a smooth strictly convex function on $K$ defined in Lemma \ref{subhar} (see also \cite{JXY}).
Then from \eqref{subv} the function $v=\Th\circ\g$ satisfies
\begin{equation}\aligned\label{DeC*Rv}
\De_{C_*\times\R^{n-k}} v\ge \k|A_{C_*\times\R^{n-k}}|^2
\endaligned
\end{equation}
on the regular set of $C_*\times\R^{n-k}$ for some positive constant $\k>0$,
where $\De_{C_*\times\R^{n-k}}$, $A_{C_*\times\R^{n-k}}$ are the Laplacian and the second fundamental form of $C_*\times\R^{n-k}$ on the regular set of $C_*\times\R^{n-k}$, respectively. In particular, $v$ is uniformly bounded.

Let $\De_{C_*}$, $A_{C_*}$ be the Laplacian and the second fundamental form of $C_*$ on the regular set of $C_*$, respectively.
Through restricting $v$ on spt$C_*\times\{0^{n-k}\}$, from \eqref{DeC*Rv} one has
\begin{equation}\aligned\label{DeC*v}
\De_{C_*} v\ge \k|A_{C_*}|^2
\endaligned
\end{equation}
on spt$C_*\setminus\{0^{k+1}\}$.
Let $\De_{\Si}$, $A_{\Si}$ be the Laplacian and the second fundamental form of $\Si$, respectively. Then \eqref{DeC*v} infers
\begin{equation}\aligned\label{DeSiv}
\De_{\Si} v\ge \k|A_{\Si}|^2\qquad \mathrm{on}\ \ \Si.
\endaligned
\end{equation}
The above inequality contradicts to the maximum principle. This is sufficient to complete the proof.
\end{proof}




\bibliographystyle{amsplain}

\end{document}